\documentclass{amsart}
\usepackage{tabu}
\usepackage{amssymb} 
\usepackage{amsmath} 
\usepackage{amscd}
\usepackage{amsbsy}
\usepackage{comment, enumerate}
\usepackage[matrix,arrow]{xy}
\usepackage{hyperref}
\usepackage{mathrsfs}
\usepackage{color}
\usepackage{mathtools,caption}
\usepackage{todonotes}

\newcommand{\dis}{\displaystyle}


\newcommand{\QQ}{\mathbb Q}
\newcommand{\ZZ}{\mathbb Z}

\newcommand{\Fn}{\mathbb F_n}
\newcommand{\Fl}{\mathbb{F}_\ell}

\newcommand{\OL}{\mathcal O_L}
\newcommand{\rs}{\sqrt{-7}}
\newcommand{\bpi}{\bar\pi}
\newcommand{\bgamma}{\bar\gamma}
\newcommand{\plambda}{\lambda^\prime}
\newcommand{\pE}{E^\prime}
\newcommand{\pEo}{E_1^\prime}
\newcommand{\pEfo}{E_{f^\prime_1}}
\newcommand{\pfo}{f^\prime_1}
\newcommand{\fp}{\mathfrak p}

\newcommand{\fn}{\mathfrak n}

\newcommand{\pNo}{N^\prime_1}
\newcommand{\pRo}{R_1^\prime}
\newcommand{\rhoffn}{\rho_{f,\fn}}
\newcommand{\rhoE}{\rho_{E,n}}
\newcommand{\rhof}{\rho_{f,n}}
\newcommand{\rhoEo}{\rho_{E_1,n}}
\newcommand{\rhopEo}{\rho_{\pEo,n}}
\newcommand{\rhoEt}{\rho_{E_2,n}}

\newcommand{\rhofofn}{\rho_{f_1,\fn}}
\newcommand{\rhofo}{\rho_{f_1,n}}
\newcommand{\rhopfofn}{\rho_{f^\prime_1,\fn}}
\newcommand{\rhopfo}{\rho_{f^\prime_1,n}}
\newcommand{\rhoftfn}{\rho_{f_2,\fn}}
\newcommand{\rhoft}{\rho_{f_2,n}}

\DeclareMathOperator{\ord}{ord}
\DeclareMathOperator{\Gal}{Gal}
\DeclareMathOperator{\Rad}{Rad}
\DeclareMathOperator{\res}{res}
\DeclareMathOperator{\norm}{Norm}
\DeclareMathOperator{\GL}{GL}
\DeclareMathOperator{\Tr}{Tr}
\DeclareMathOperator{\Frob}{Frob}


\newtheorem{theorem}{Theorem}[section]
\newtheorem{lemma}[theorem]{Lemma}
\newtheorem{proposition}[theorem]{Proposition}

\begin{document}

\title[An application of the modular method]{An application of the modular method and the symplectic argument to a Lebesgue--Nagell equation}

\author{Angelos Koutsianas}
\address{Department of Mathematics, The University of British Columbia, 1984 Mathematics Road, Vancouver, BC, V6T 1Z2, Canada}
\email{akoutsianas@math.ubc.ca}

\date{\today}

\keywords{Lebesgue--Nagell equation, modular approach, symplectic argument}
\subjclass[2010]{Primary 11D61}

\begin{abstract}
In this paper, we study the generalized Lebesgue--Nagell equation
$$
x^2+7^{2k+1}=y^n.
$$
This is the last case of equations of the form $x^2+q^{2k+1}=y^n$ with $k\geq0$ and $q>0$ where $\QQ(\sqrt{-q})$ has class number one. Our proof is based on the modular method and the symplectic argument.
\end{abstract}

\maketitle

\section{Introduction}

One of the Diophantine equation which has a very rich history is the Lebesgue-Nagell equation
\begin{equation}\label{eq:LebesgueNagell}
x^2+D=y^n,\qquad x,y,D,n\in\ZZ, n\geq 3.
\end{equation}
The first who solved \eqref{eq:LebesgueNagell} for general $n$ is Lebesgue for $D=1$ \cite{Lebesgue50} while Nagell solves the cases $D=3,5$ \cite{Nagell48} in $1923$. The cases $0<D\leq 100$ were studied by many, Cohn solves $77$ of the cases \cite{Cohn93}, Mignotte-De Weger solve the missing by Cohn cases for $D=74,86$ \cite{MignotteWeger96} and Bennett-Skinner solve the cases $D=55,95$ \cite{BennettSkinner04}. Finally, the remaining $19$ were solved by Bugeuad-Mignotte-Siksek \cite{BugeaudMignotteSiksek06} using a variety of modern techniques as linear of forms in logarithms and modular method. 

All the above cases were for fix $D$. In recent years, many authors have studied \eqref{eq:LebesgueNagell} in the more general case when the prime factors of $D$ lie in a finite set $S$. In other words, the solutions of the equation
\begin{equation}
x^2+p_1^{a_1}\cdots p_k^{a_k}=y^n,\qquad x,y,D,a_i,n\in\ZZ,~n\geq 3,~a_i\geq 0.
\end{equation}
for a fix set $S=\{p_1,\cdots,p_k\}$. The case when the size of $S$ is small has gained the interest of several authors. For example, for $S=\{2\}$ the equation was studied by Brown \cite{Brown77}, Arief-Muriefah \cite{AriefMuriefah97}, Cohn  \cite{Cohn92} and Le \cite{Le02}. The case $S=\{3\}$ is studied in \cite{Brown77,ArifMuriefah98,Luca00,Tao08} and the case $S=\{5\}$ in \cite{ArifMuriefah99,Muriefah06,Tao09}. In \cite{BerczesPink08} B\'{e}rczes-Pink completely solve \eqref{eq:LebesgueNagell} for $S=\{q\}$ with $2\leq q\leq 100$, $k$ even and $\gcd(x,y)=1$. The literature is very rich for sets $S$ with size bigger than $1$ and we refer to \cite{BerczesPink12,BerczesPink08,CangulDemirciInamLucaSoydan13} for a more detailed exposition of the known results.

It is well-known that $\QQ(\sqrt{-q})$ has class number one for $q = 1$, $2$, $3$, $7$, $11$, $19$, $43$, $67$ and $163$. It is a natural question to study \eqref{eq:LebesgueNagell} for $D=q^k$ and $q$ as above. As we have already mentioned the cases $q=1,2,3$ have completely solved thanks to the contribution of many authors \cite{Lebesgue50,Brown77,Cohn92,AriefMuriefah97,ArifMuriefah98,Luca00,Le02,Tao08}. Due to Arif-Muriefah \cite{ArifMuriefah02}, Saradha-Srinivasan \cite{SaradhaSrinivasan06},  Zhu-Le \cite{ZhuLe11} and B\'{e}rczes-Pink \cite{BerczesPink08} the solutions for the cases $q=11,19,43,67,163$ have also been classified. For $D=7^k$  Luca-Togb\'{e} completely solve \eqref{eq:LebesgueNagell} for $k$ even \cite{LucaTogbe07} and the case $k$ odd and $y$ odd is studied in \cite{SaradhaSrinivasan06}.

In this paper, we study the case $q=7$ and $k$ odd. From the above we understand that it is enough to consider $y$ being even because this is the remaining case of \eqref{eq:LebesgueNagell} for $D=7^k$. A solution of \eqref{eq:LebesgueNagell} is called \textit{non-trivial} if $x\neq 0$. The main result of the paper is the following.

\begin{theorem}\label{thm:main}
All non-trivial solutions of the Lebesgue-Nagell equation
\begin{equation}\label{eq:main}
x^2+7^{2k+1}=y^n,\qquad x,y,k,n\in\ZZ,~n\geq 3,~k\geq 0
\end{equation}
are
\begin{align*}
n = 3 & \qquad (|x|,y,k) = (7^{3\lambda},2\cdot 7^{2\lambda},3\lambda), (13\cdot 7^{3\lambda},8\cdot 7^{2\lambda},3\lambda+1),\\
 & \qquad (147\cdot 7^{3\lambda},28\cdot 7^{2\lambda},3\lambda+1),(1911\cdot 7^{3\lambda},154\cdot 7^{2\lambda},3\lambda+1),\\
 & \qquad (3\cdot 7^{3\lambda+2},4\cdot 7^{2\lambda+1},3\lambda+1),(7^{3\lambda+2},2\cdot 7^{2\lambda+1},3\lambda+1),\\
  & \qquad (39\cdot 7^{3\lambda+2},22\cdot 7^{2\lambda+1},3\lambda+1),\\
n = 4 & \qquad (|x|,y,k) = (3\cdot 7^{2\lambda},2\cdot 7^\lambda,2\lambda),\\
n = 5 & \qquad (|x|,y,k) =  (5\cdot 7^{5\lambda},2\cdot 7^{2\lambda},5\lambda),(181\cdot 7^{5\lambda},8\cdot 7^{2\lambda},5\lambda).
\end{align*}
for $\lambda\geq 0$ when $k,n$ satisfy one of the following conditions,
\begin{itemize}
\item$n=3,4,5$ or,
\item $n$ is divisible by a prime congruent to $13,17,19,23\pmod{24}$ and $k\not\equiv 1\pmod 3$ or,
\item $n$ is divisible by a prime congruent to $5,7,13,23\pmod{24}$ and $k\equiv 1\pmod 3$. 
\end{itemize}
\end{theorem}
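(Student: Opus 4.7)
My plan combines a direct algebraic attack for small exponents with the modular method plus a symplectic elimination for large prime exponents.

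\textbf{Small exponents.} For $n\in\{3,4,5\}$ I would work in $\OK$ for $K=\QQ(\sqrt{-7})$, using that $K$ has class number one. The equation factors as $(x+7^k\sqrt{-7})(x-7^k\sqrt{-7})=y^n$; since $x$ must be odd (reduce modulo $8$ using $7\equiv-1$), the two factors are coprime away from $\sqrt{-7}$, so one of them equals $\pm(\sqrt{-7})^s\gamma^n$ for an explicit $s$ and some $\gamma\in\OK$. For $n=3,5$, expanding the $n$-th power in a $\ZZ$-basis of $\OK$ and taking imaginary parts would give a Thue-type equation in two integer unknowns, solvable by elementary factorisation; for $n=4$ the rational factorisation $(y^2-x)(y^2+x)=7^{2k+1}$ suffices. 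This should produce the solutions listed in the theorem. Since a solution with exponent $n$ is also a solution with exponent $p$ for every prime $p\mid n$, it then remains to rule out new solutions for $n=p$ a prime $\geq 7$ satisfying the stated congruences.

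\textbf{Frey curve and level lowering.} Using class number one once more, I would write $x+7^k\sqrt{-7}=\pm(\sqrt{-7})^s\gamma^p$ with $\gamma\in\OK$ and $s\in\{0,1\}$, and attach to $(x,y)$ a Frey elliptic curve $E/\QQ$ whose coefficients are simple polynomial expressions in the rational and imaginary parts of $\gamma^p$. The precise shape of $E$ will depend on $k\pmod 3$, because $7^{2k+1}$ is a perfect cube exactly when $k\equiv 1\pmod 3$; this is the source of the two-case split in the theorem. Standard arguments give irreducibility of $\bar\rho_{E,p}$ for $p\geq 7$ (via Mazur), and the conductor is computed by Tate's algorithm. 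Ribet's level lowering then identifies $\bar\rho_{E,p}$ with $\bar\rho_{f,p}$ for a newform $f$ of small level supported on $\{2,7\}$, and a tabulation of newforms at these levels leaves only a handful of candidates, each attached to an elliptic curve $\pE/\QQ$ of tiny conductor; candidates with irrational Hecke field can be eliminated by the usual congruence trick at a small auxiliary prime.

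\textbf{Symplectic elimination.} For each surviving pair $(E,\pE)$, the $G_\QQ$-equivariant isomorphism $E[p]\cong\pE[p]$ is either symplectic or anti-symplectic for the Weil pairing. The criterion of Kraus--Oesterl\'e, refined by Halberstadt--Kraus, reads off the symplectic type from $v_\ell(\Delta_E)/v_\ell(\Delta_{\pE})\bmod p$ at each prime $\ell$ of multiplicative reduction. Applying this at $\ell=2$ and $\ell=3$ and demanding that the two types coincide will force a Legendre-symbol condition on $p$ which, once unpacked, rules out precisely the residue classes modulo $24$ appearing in the theorem. The split on $k\pmod 3$ enters through $v_3$: it changes $v_3(\Delta_E)$, hence which ratio must be a square modulo $p$, and so produces the two complementary lists of admissible residue classes.

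The main technical obstacle I anticipate is this last step: once the candidate newforms have been enumerated, one must compute $\Delta_E$, $\Delta_{\pE}$ and their $\ell$-adic valuations carefully in each of the two Frey models and verify that the combined symplectic/anti-symplectic constraints fail for exactly the classes listed. The cases $n=3,5$ may also hide nontrivial casework, since the defining Thue equations have many subcases governed by the $7$-adic valuation of $\gamma$, and these subcases are what account for the seven different families at $n=3$ and two families at $n=5$ in the theorem.
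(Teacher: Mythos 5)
Your high-level strategy (factorisation over $\QQ(\sqrt{-7})$ for $n=3,4,5$, then Frey curve, level lowering and a symplectic elimination at $\ell=2,3$ with a dichotomy on $k\bmod 3$) is indeed the paper's strategy, but there are two concrete gaps. First, you never perform the descent on $\gcd(x,y)$: since $x$ and $y$ can share powers of $7$, a non-trivial solution reduces either to a \emph{primitive} solution of $x^2+7^{2k+1}=y^n$ or to a primitive solution of the companion equation $7x^2+1=y^n$ (Lemma \ref{lem:split_two_eq}). The second equation is not a cosmetic variant: it produces three of the seven families at $n=3$ in the statement (e.g.\ $7\cdot 3^2+1=4^3$ gives $(3\cdot 7^{3\lambda+2},4\cdot 7^{2\lambda+1},3\lambda+1)$), and for large $n$ it requires its own Frey curve at level $98$, eliminated by a congruence for the irrational form and by inertia at $7$ via the Tate curve for the rational one (Section \ref{sec:case_b}) -- no symplectic argument and no congruence condition on $n$ appears there at all. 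Your factorisation with a factor $(\sqrt{-7})^s$ does not capture this splitting.

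Second, and more seriously, your symplectic step cannot run as described. A Frey curve attached via Bennett--Skinner to $x^2+7^{2k+1}=y^n$ has level supported on $\{2,7\}$ and good reduction at $3$ (after one shows $3\nmid x$, Lemma \ref{lem:3_nmid_a}), so the Kraus--Oesterl\'e multiplicative criterion at $\ell=3$ has nothing to bite on, and this single curve does not carry enough local data. The paper must introduce a second Frey curve of signature $(2,3,n)$, whose construction is where $2k+1=3\lambda'+i$ and hence the $k\bmod 3$ split actually enters; it has additive reduction at $3$ and $7$, and the two curves are played against each other (multi-Frey) together with inertia computations at $3$ and $7$. Moreover the residue classes modulo $24$ come out of $\ell=2$, not $\ell=3$: the comparison of $v_2(\Delta)$ for the $(2,3,n)$ curve and the six exceptional curves yields the conditions $(-6/n)=1$ (for $i\neq 0$) versus $(-2/n)=1$ (for $i=0$) in Lemma \ref{lem:symplectic_l2}, while at $\ell=3$ the additive-reduction criteria of Freitas--Kraus show the isomorphism is \emph{always} symplectic (Lemma \ref{lem:symplectic_l3}); the contradiction is between $\ell=2$ and $\ell=3$, with the roles essentially opposite to what you propose. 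Smaller but real issues: the $n=3,5$ equations are Thue--Mahler (the right-hand side is $\pm 7^k$ or $\pm 2\cdot 7^k$ with $k$ unbounded), so ``elementary factorisation'' does not suffice -- the paper needs $S$-unit bounds and a dedicated Thue--Mahler solver; the case $y=2$ must be excluded separately before Mazur's theorem can be applied to the $(2,3,n)$ curve; and $n=7$ with $k\equiv 1\pmod 3$ (which your congruence lists do claim) is outside the modular method's range $n\geq 11$ and is handled via the Poonen--Schaefer--Stoll classification of $x^2+y^3=z^7$.
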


From the results of Luca-Togb\'{e} \cite{LucaTogbe07}, Saradha-Srinivasan \cite{SaradhaSrinivasan06} and Theorem \ref{thm:main} we have the following theorem.

\begin{theorem}
Suppose $n$ is a prime such that $n\equiv 13,23\pmod{24}$. Then, the Lebesgue-Nagell equation
\begin{equation}
x^2+7^k=y^n,\qquad x,y,k\in\ZZ,~k\geq 0
\end{equation}
has no non-trivial solutions.
\end{theorem}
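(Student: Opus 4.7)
The equation $x^2+7^k=y^n$ splits into three disjoint regimes, each already covered in the literature or by Theorem \ref{thm:main}: (i) $k$ even, (ii) $k$ odd with $y$ odd, and (iii) $k$ odd with $y$ even. My plan is to verify that in each of the three regimes the hypothesis $n$ prime with $n\equiv 13,23\pmod{24}$ rules out any non-trivial solution, with the bulk of the work hidden in the three source results.

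First, I would dispose of the case $k$ even by invoking Luca--Togb\'e \cite{LucaTogbe07}, which classifies all solutions of $x^2+7^{2m}=y^n$; I would simply read off that the exponents $n$ appearing in their solution list are never $\equiv 13,23\pmod{24}$. Next, for $k$ odd and $y$ odd, I would appeal to Saradha--Srinivasan \cite{SaradhaSrinivasan06} and in the same way check that no solution has $n\equiv 13,23\pmod{24}$. These two reductions are essentially bookkeeping citations and should present no difficulty.

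The interesting regime is $k$ odd with $y$ even, where Theorem \ref{thm:main} applies. Here I would write $n$ as a prime $\geq 7$ (since $n$ prime with $n\equiv 13$ or $23\pmod{24}$ forces $n\geq 13$, in particular $n\notin\{3,4,5\}$) and check that the hypotheses of Theorem \ref{thm:main} are met for \emph{every} $k$. If $n\equiv 13\pmod{24}$ then $n$ lies in both residue sets $\{13,17,19,23\}$ and $\{5,7,13,23\}\pmod{24}$, and similarly if $n\equiv 23\pmod{24}$; thus whichever of $k\equiv 1\pmod 3$ or $k\not\equiv 1\pmod 3$ holds, one of the two bullet conditions is satisfied. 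Since no solution of Theorem \ref{thm:main} has exponent other than $3,4,5$, we conclude that no non-trivial solution exists in this regime.

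The only place where a genuine obstacle could arise is in verifying the congruence bookkeeping for the first two regimes: one must confirm that the solution lists of \cite{LucaTogbe07} and \cite{SaradhaSrinivasan06} really contain no exponent $n\equiv 13,23\pmod{24}$. Apart from that, the argument is a case split and a mod-$24$ check, so the theorem falls out as a clean corollary of Theorem \ref{thm:main} combined with the two prior results.
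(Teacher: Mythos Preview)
Your proposal is correct and matches the paper's own argument exactly: the paper simply states that the theorem follows by combining Luca--Togb\'e \cite{LucaTogbe07} (the case $k$ even), Saradha--Srinivasan \cite{SaradhaSrinivasan06} (the case $k$ odd, $y$ odd), and Theorem~\ref{thm:main} (the case $k$ odd, $y$ even), and your mod-$24$ check that $\{13,23\}$ is precisely the intersection of the two residue sets in Theorem~\ref{thm:main} is the intended observation.
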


The proof of Theorem \ref{thm:main} is based on the modular method and the symplectic argument when $n$ is `big'. For a fix $n$ the solution of \eqref{eq:main} is reduced to the solution of Thue and Thue-Mahler equations and we solve these equations for the `small' $n$'s.

The paper is organised as follows. In Section \ref{sec:modular_method} we recall known results and terminology about the modular method and the symplectic argument. We also briefly explain the main idea of the symplectic argument. In Section \ref{sec:preliminaries} we show that a non-trivial solution of \eqref{eq:main} corresponds to a non-trivial primitive solution of one of the equations $x^2+7^{2k+1}=y^n$ or $7x^2+1=y^n$. We also show that the problem can be reduced to the solution of Thue and Thue-Mahler equations.  At the end of section we solve \ref{eq:main} for the special case $y=2$ which we need in the modular method. In Section \ref{sec:case_a} we study the non-trivial primitive solutions of $x^2+7^{2k+1}=y^n$ when $y$ is even. We introduce two Frey curves of signature $(p,p,2)$ and $(2,3,p)$. We use the local information of Frey curves to prove the non-existence of solutions

The required computations for this paper have been done using \texttt{Sage} \cite{Sage} and \texttt{Pari} \url{http://pari.math.u-bordeaux.fr/}. The code can be found at author's homepage \url{https://sites.google.com/site/angeloskoutsianas/research}.

\section{Modular method}\label{sec:modular_method}

Our proof will make use of the modular method and the symplectic argument. We use Frey curves, modularity, Galois representations and level lowering. In this section we will recall some of the results and terminology that are used in the proof. The reader can find a more detailed exposition of the techniques and ideas in e.g. \cite[Chapter 15]{Cohen07}.

Suppose $f$ is a cuspidal newform of weight $2$ and level $N_f$ with $q$-expansion
$$
f(q) = q + \sum_{i\geq 2}a_i(f)q^i.
$$
We denote by $K_f$ the \textit{eigenvalue field} of $f$ and we say $f$ is \textit{irrational} if $[K_f:\QQ]>1$ and \textit{rational} otherwise. Suppose $n$ is a rational prime and $\fn$ a prime ideal in $K_f$ above $n$. Then, we can associate a continuous, semisimple representation 
$$\rhoffn:~\Gal(\overline{\QQ}/\QQ)\rightarrow \GL_2(\overline{\mathbb F}_n)$$
that is unramified at all primes away from $2n$ and $\Tr(\rhoffn(\Frob_n)\equiv a_n(f)\pmod{\fn}$.
If $f$ is rational then we use the notation $\rhof$ and we denote by $E_f$ a corresponding to $f$ elliptic curve over $\QQ$ due to modularity.

Suppose $E$ is an elliptic curve over $\QQ$ with conductor $N_E$. For a prime $\ell$ of good reduction of $E$ we have $a_\ell(E) = \ell + 1 - \#\tilde E(\Fl)$, where $\tilde E$ is the reduction of $E$ at $\ell$. We denote by $\rhoE$ the Galois representation of $\Gal(\overline{\QQ}/\QQ)$ on the $n$-torsion subgroup of $E$.

\begin{proposition}\label{prop:congruence_criterion}
With the above notation we assume $\rhoE\simeq\rhoffn$. Then there exists a prime ideal $\fn\mid n$ of $K$ such that for all rational primes $\ell$
\begin{enumerate}[(i)]
\item if $\ell\nmid n N_E N_f$ then $a_{\ell}(f)\equiv a_\ell(E)\pmod{\fn}$,

\item if $\ell\nmid nN_f$ and $\ell\parallel N_E$ then $(\ell + 1)\equiv\pm a_\ell(f)\pmod{\fn}$.
\end{enumerate}
\end{proposition}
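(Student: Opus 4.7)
The plan is to prove both congruences by computing the trace of $\Frob_\ell$ on each representation and using the fact that the given isomorphism $\rhoE\simeq\rhoffn$ preserves traces of semisimplifications by the Brauer--Nesbitt theorem. At the outset I fix a prime $\fn\mid n$ of $K_f$ compatible with the embedding underlying the isomorphism, so that both representations are viewed in a common residue field.

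For part (i), let $\ell\nmid nN_EN_f$. Since $\ell\nmid N_E$ and $\ell\neq n$, the N\'eron--Ogg--Shafarevich criterion gives that $\rhoE$ is unramified at $\ell$, and the standard computation on the reduction $\tilde E$ over $\Fl$ yields $\Tr(\rhoE(\Frob_\ell))\equiv \ell+1-\#\tilde E(\Fl)=a_\ell(E)\pmod n$. By the construction of $\rhoffn$, this representation is also unramified at $\ell$ with $\Tr(\rhoffn(\Frob_\ell))\equiv a_\ell(f)\pmod\fn$. The given isomorphism then equates the two traces modulo $\fn$, yielding the stated congruence.

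For part (ii), the hypothesis $\ell\parallel N_E$ says that $E$ has multiplicative reduction at $\ell$. By the theory of the Tate curve, the restriction of $\rhoE$ to a decomposition group at $\ell$ is an extension of an unramified character $\chi$ by $\chi\cdot\omega$, where $\omega$ is the mod-$n$ cyclotomic character and $\chi$ is trivial or the unramified quadratic character according to whether the reduction is split or non-split. Consequently, the semisimplification has Frobenius trace $\chi(\Frob_\ell)(1+\ell)\equiv\pm(\ell+1)\pmod n$. Since $\ell\nmid nN_f$, $\rhoffn$ is still unramified at $\ell$ with Frobenius trace $a_\ell(f)\pmod\fn$, and Brauer--Nesbitt applied to the isomorphism restricted to the decomposition group at $\ell$ gives $a_\ell(f)\equiv\pm(\ell+1)\pmod\fn$.

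The main technical ingredient is the local description of $\rhoE$ at a prime of multiplicative reduction via the Tate curve; the rest is bookkeeping with traces on either side of the hypothesized isomorphism. The sign ambiguity in (ii) is intrinsic, encoding the split/non-split distinction at $\ell$, and cannot be resolved without further local information about $E$.
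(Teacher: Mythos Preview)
Your argument is correct and is precisely the standard one. The paper does not supply its own proof of this proposition: immediately after the statement it simply remarks that the result is standard and points to \cite[p.~203]{Serre87} and \cite[Proposition~4.3]{BennettSkinner04}. What you have written is exactly the content behind those citations---Eichler--Shimura/Deligne for the trace of $\rhoffn$ at unramified primes, and the Tate parametrisation for the local shape of $\rhoE$ at a prime of multiplicative reduction---so there is nothing to compare beyond noting that you have spelled out what the paper leaves to the literature. One minor remark: invoking Brauer--Nesbitt is harmless but unnecessary here, since once $\rhoE\simeq\rhoffn$ globally the traces of any element already agree, and for an upper-triangular local representation the trace of a Frobenius lift is read off directly from the diagonal characters $\chi\omega$ and $\chi$.
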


The above proposition is standard (see \cite[p. 203]{Serre87}, \cite[Proposition 4.3]{BennettSkinner04}, etc) and we use it to bound $n$.

Suppose $E$ and $\pE$ are two elliptic over $\QQ$ and $n$ an odd prime. Let $\phi:E[n]\rightarrow\pE[n]$ be a $\Gal(\overline{\QQ}/\QQ)$-modules isomoprhism, hence there is an element element $r(\phi)\in\Fn^*$ such that, for all $P, Q\in E[n]$, the Weil pairings satisfy $e_{\pE,n}(\phi(P),\phi(Q))=e_{E,n}(P,Q)^{r(\phi)}$. We say that $\phi$ is  a \textit{symplectic isomorphism} (resp. \textit{anti-symplectic isomorphism}) if $r(\phi)$ is a square (resp. non-square) residue modulo $n$. If all $\Gal(\overline{\QQ}/\QQ)$-modules isomorphisms $\phi:E[n]\rightarrow\pE[n]$ are symplectic (resp. anti-symplectic) we say that the \textit{symplectic type} of the triple $(E,\pE,n)$ is symplectic (resp. anti-symplectic).

A variety of criteria that determine the symplectic type of $(E,\pE,n)$ using local information of $E,\pE$ have been developed even from 90's \cite{KrausOsterle92} while very recently we have the complete classification \cite{FreitasKraus17}. A combination of the modular method and symplectic criteria, which is called \textit{the symplectic argument}, has been used to the solution of Diophantine equations (\cite{Freitas16}, \cite{FreitasKraus16}, \cite{FreitasNaskreckiStoll17}, \cite{BennettBruniFreitas18}). The main idea is the following; after applying modularity and level lowering, we get an isomorphism $\rhoE\simeq\rhoffn$ for some newform of `suitable' level $N_f$ and the Frey curve $E$. Suppose $f$ is rational and $E_f$ the corresponding elliptic curve due to modularity. Suppose we have two primes $\ell_1,\ell_2$ for which we can determine the symplectic type $(E,E_f,n)$ using only local information of $E$ and $E_f$. In many cases the symplectic type given by $\ell_1$ and $\ell_2$ is different when $n$ satisfies specific congruence condition. This allows us to prove that no solutions arise from $E_f$ for these exponents $n$.

\section{Preliminaries}\label{sec:preliminaries}

In this section we show that a non-trivial solution of \eqref{eq:main} corresponds to a non-trivial primitive solution of one of the two equations
$$x^2+7^{2k+1}=y^n\qquad \text{or}\qquad 7x^2+1=y^n.$$
We say a solution \textit{primitive} if $\gcd(x,y)=1$. In addition, we show that for a fix exponent $n$ the problem of determining the non-trivial primitive solutions of the above two equations is reduced to the problem of solving Thue and Thue-Mahler equations. Finally, we solve \eqref{eq:main} for the case $y=2$.

The case $y$ is odd has been studied in \cite{SaradhaSrinivasan06}, so for the rest of the paper we make the assumption that $y$ is even. Moreover, we define $L=\QQ(\rs)$ and denote by $\OL$ the ring of integers of $L$. It holds that $2=\pi_2\bpi_2$ where $\pi_2=\frac{1+\rs}{2}$ and $\bpi_2=\frac{1-\rs}{2}$. It is also known that $\OL=\ZZ[\pi_2]$. We use the same notation for the prime ideals generated by $\pi_2$ and $\bpi_2$.


\begin{lemma}\label{lem:split_two_eq}
A non-trivial solution $(a,b)$ of \eqref{eq:main} corresponds to a non--trivial primitive solution to one of the following two equations
\begin{equation}\label{eq:main_a_lemma}
x^2+7^{2k+1}=y^n,
\end{equation}
or
\begin{equation}\label{eq:main_b_lemma}
7x^2+1=y^n,
\end{equation}
with $k\geq 0$ and $n\geq 3$.
\end{lemma}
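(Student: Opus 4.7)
The plan is to let $(a,b)$ be a non-trivial solution of \eqref{eq:main} and split on the $7$-adic valuation. Set $\alpha = \ord_7(a)$ and $\beta = \ord_7(b)$. Since $\ord_7(a^2) = 2\alpha$ is even while $\ord_7(7^{2k+1}) = 2k+1$ is odd, these two valuations cannot coincide, so
$$n\beta \;=\; \ord_7(b^n) \;=\; \ord_7\!\bigl(a^2 + 7^{2k+1}\bigr) \;=\; \min(2\alpha,\,2k+1).$$
Write $a = 7^\alpha a''$ and $b = 7^\beta b''$ with $7\nmid a''$ and $7\nmid b''$; note $a''\neq 0$ because $a\neq 0$. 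The proof then splits into the two cases $2\alpha < 2k+1$ and $2\alpha > 2k+1$.

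In the first case $n\beta = 2\alpha$. Substituting and dividing the original equation by $7^{2\alpha}$ gives
$$(a'')^2 + 7^{2(k-\alpha)+1} \;=\; (b'')^n,$$
which is a non-trivial solution of \eqref{eq:main_a_lemma} with $k$ replaced by the non-negative integer $k-\alpha$. Any common prime divisor of $a''$ and $b''$ would divide $7^{2(k-\alpha)+1}$, hence equal $7$, contradicting the choice of $a''$ and $b''$; so the solution is primitive.

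In the second case $n\beta = 2k+1$, so $n$ is odd and $2\alpha - (2k+1) = 2m+1$ for some integer $m = \alpha - k - 1 \geq 0$. Dividing the equation by $7^{2k+1}$ yields
$$7\bigl(7^m a''\bigr)^2 + 1 \;=\; (b'')^n,$$
which is a solution of \eqref{eq:main_b_lemma} with $x = 7^m a''\neq 0$ and $y = b''$. Primitivity is automatic here because any common prime divisor of $x$ and $y$ would divide $1$.

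The argument is essentially a bookkeeping exercise, so there is no real obstacle; the only point requiring care is verifying that the two 7-adic valuations $2\alpha$ and $2k+1$ cannot be equal (the parity observation), which is what allows us to read off $n\beta$ as the minimum and thus perform the clean substitution in each of the two cases.
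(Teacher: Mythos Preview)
Your proof is correct and follows essentially the same route as the paper's: both extract the $7$-parts of $a$ and $b$, use the parity mismatch between $2\alpha$ and $2k+1$ to force $n\beta$ to equal the smaller of the two, and then divide through to land in one of the two target equations. Your write-up is in fact slightly more explicit than the paper's about the parity observation and about checking primitivity in each case.
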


\begin{proof}
Let $(a,b)$ be a solution of \eqref{eq:main}. Suppose $p$ is a prime such that $p\mid (a,b)$. Then from \eqref{eq:main} we understand that $p=7$. We write $a=7^ta_1$ and $b = 7^sb_1$ where $(a_1,7)=(b_1,7)=1$. So equation \eqref{eq:main} becomes
\begin{equation}
7^{2t}a_1^2 + 7^{2k+1} = 7^{ns}b_1^n.
\end{equation}
By looking to the exponents and keeping in mind that $(a_1,7)=(b_1,7)=1$, we have that either $2t=ns < 2k+1$ or $2k+1=ns<2t$. So, from the initial case we get
\begin{equation}
a_1^2+7^{2(k-t)+1}=b_1^n,
\end{equation}
which corresponds to a non-trivial primitive solution of \eqref{eq:main_a_lemma} while from the latter case we get
\begin{equation}
7\left(7^{t-k-1}a_1\right)^2+1=b_1^n,
\end{equation}
which corresponds to a non-trivial primitive solution of \eqref{eq:main_b_lemma}.
\end{proof}

\vspace*{0.3cm}
\subsection{Reduction to Thue and Thue-Mahler equation}\label{sec:Thue_ThueMahler}

Suppose $(a,b)$ is a non-trivial primitive solution of \eqref{eq:main_a_lemma}. We also assume that $n$ is an odd integer. Factorizing \eqref{eq:main_a_lemma} over $L$ we have
\begin{equation}\label{eq:fac_case_a}
(a + 7^k\rs)(a - 7^k\rs) = b^n.
\end{equation}
Let $\fp$ be a prime ideal of $\OL$ that divides $\gcd(a+7^k\rs,a-7^k\rs)$, this implies that $\fp\mid 2\rs$. However, $a,b,7$ are pairwise coprime and $b$ even, hence $\fp\mid 2$. Because the class number of $L$ is one and after possibly replacing $\gamma$ with $-\gamma$ below we have,
\begin{equation}
a+7^k\rs = \pi_2^{a_1}\bpi_2^{a_2}\gamma^n,
\end{equation}
where $\gamma\in\OL$ and $a_1,a_2 < n$ positive integers. After conjugating and adding or subtracting we have
\begin{align}
7^k & = \frac{\pi_2^{a_1}\bpi_2^{a_2}\gamma^n - \pi_2^{a_2}\bpi_2^{a_1}\bgamma^n}{2\rs},\\
a & = \frac{\pi_2^{a_1}\bpi_2^{a_2}\gamma^n + \pi_2^{a_2}\bpi_2^{a_1}\bgamma^n}{2}.
\end{align}
Because $a$ is odd, $2=\pi_2\bpi_2$ and $a_1,a_2<n$ we conclude that either $a_1\leq 1$ or $a_2\leq 1$. Moreover, from \eqref{eq:fac_case_a} we have that $a_1+a_2=n$.  Without loss of generality we assume that $a_2\leq 1$. Suppose $\gamma = u+v\pi_2$ where $u,v\in\ZZ$. Because $a,7$ are coprime and $n>1$ we undestand that $u,v$ are coprime. The same approach works when $n$ is even. To sum up, we have the following proposition.

\begin{proposition}\label{prop:ThueMahler}
Suppose $(a,b)$ is a non-trivial primitive solution of \eqref{eq:main_a_lemma}. Then, there exist coprime integers $u,v\in$ such that
\begin{equation}
a+7^k\rs = \pm \alpha^e(u+v\pi_2)^n,
\end{equation}
where $e=0,1$ and $\alpha = 2\pi_2^{n-2}$. The pair $(u,v)$ is an integral solution of the Thue-Mahler equation,
\begin{equation}
\pm 7^k = \frac{\alpha^e(u+v\pi_2)^n - \bar\alpha^e(u+v\bpi_2)^n}{2\rs}.
\end{equation}
\end{proposition}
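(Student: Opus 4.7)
The plan is to organize the descent sketched in the paragraphs preceding the proposition into clean algebraic steps, exploiting that $\OL = \ZZ[\pi_2]$ is a PID with unit group $\{\pm 1\}$. First, I would factor the equation over $L$ to obtain $(a + 7^k\rs)(a - 7^k\rs) = b^n$, and argue that any common prime ideal divisor of the two factors lies in $\{\pi_2, \bpi_2\}$: their difference is $2\cdot 7^k\rs$, which is supported only on these primes together with the ramified prime above $7$, and the primitivity of $(a,b)$ combined with $b$ being even excludes the latter (since $7\mid a$ would force $7\mid b$). Unique factorization then yields
\begin{equation*}
a + 7^k\rs = \pm \pi_2^{a_1}\bpi_2^{a_2}\gamma^n
\end{equation*}
for some $\gamma \in \OL$ coprime to $2$ and integers $a_1, a_2 \geq 0$. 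A $\pi_2$-adic comparison using $v_{\pi_2}(2\cdot 7^k\rs) = 1$ then gives $\min(a_1, a_2) \leq 1$; after conjugating if necessary I may assume $a_2 \in \{0, 1\}$.

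Second, I would normalize the factorization into the form required by the proposition. Taking norms gives $a_1 + a_2 = n\, v_2(b)$. If $a_2 = 0$, then $a_1 = n v_2(b)$ and absorbing $\pi_2^{v_2(b)}$ into $\gamma$ produces $u + v\pi_2 \in \OL$ with $a + 7^k\rs = \pm(u + v\pi_2)^n$, which is the case $e = 0$. If $a_2 = 1$, then $a_1 = nv_2(b) - 1 \geq n - 1$, and the identity $\pi_2\bpi_2 = 2$ rewrites $\pi_2^{a_1}\bpi_2 = 2\pi_2^{a_1 - 1} = \alpha \cdot \pi_2^{n(v_2(b) - 1)}$; absorbing $\pi_2^{v_2(b) - 1}$ into $\gamma$ gives $a + 7^k\rs = \pm\alpha(u + v\pi_2)^n$, which is the case $e = 1$.

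Third, I would establish the coprimality of $u, v$ and derive the Thue--Mahler equation. If a rational prime $p$ divides $\gcd(u, v)$, then $p^n$ divides every $\ZZ$-coefficient of $(u + v\pi_2)^n$ in the basis $\{1, \pi_2\}$; comparing with the $\pi_2$-coefficient $2 \cdot 7^k$ of $a + 7^k\rs$ (using $\rs = 2\pi_2 - 1$), I get $p^n \mid 2 \cdot 7^k$ when $e = 0$, and an analogous constraint once the factor $\alpha^e$ is tracked when $e = 1$. For $n \geq 3$ this forces $p \in \{2, 7\}$, and both options contradict either $\gcd(a, 7) = 1$ or the valuation constraint $v_{\bpi_2}(a + 7^k\rs) = a_2 \in \{0,1\}$. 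Finally, conjugating the identity and subtracting yields
\begin{equation*}
2 \cdot 7^k\rs = \pm\bigl(\alpha^e(u + v\pi_2)^n - \bar\alpha^e(u + v\bpi_2)^n\bigr),
\end{equation*}
and dividing by $2\rs$ gives the stated equation.

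The main delicacy lies in the coprimality step, since the exact divisibility forced on $\gcd(u,v)$ depends on carefully tracking the $\pi_2$-coefficient through the $\alpha^e$ factor, and one must use both primitivity of $(a,b)$ and $\gcd(a, 7) = 1$ to close the argument; the normalization step also requires confirming $v_2(b) \geq 1$ in the case $e = 1$, which is precisely where the hypothesis that $b$ is even is invoked.
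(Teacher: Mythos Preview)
Your proposal is correct and follows essentially the same descent as the paper's argument preceding the proposition. The only cosmetic differences are that you keep the full $\pi_2,\bpi_2$-exponents and absorb $n$-th powers at the end (whereas the paper reduces them modulo $n$ at the outset, writing $a_1,a_2<n$ with $a_1+a_2=n$), and you bound $\min(a_1,a_2)\le 1$ via $v_{\pi_2}(2\cdot 7^k\rs)=1$ rather than via the oddness of $a$; both routes are the same computation in different clothing.
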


Working similarly with the equation \eqref{eq:main_b_lemma} we have the following proposition,

\begin{proposition}\label{prop:Thue}
Suppose $(a,b)$ is a non-trivial primitive solution of \eqref{eq:main_b_lemma}. Then, there exist coprime integers $u,v$ such that
\begin{equation}
1+a\rs = \pm\alpha^e(u+v\pi_2)^n,
\end{equation}
where $e=0,1$ and $\alpha=2\pi_2^{n-2}$. The pair $(u,v)$ is an integral solutions of the Thue equation,
\begin{equation}
\pm 1 = \frac{\alpha^e(u+v\pi_2)^n + \bar\alpha^e(u+v\bpi_2)^n}{2}.
\end{equation}
\end{proposition}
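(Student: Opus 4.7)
The plan is to follow the template of Proposition \ref{prop:ThueMahler} almost verbatim, with one structural change at the very end: since I now need to recover the rational integer $1$ (the real part of $1+a\sqrt{-7}$) instead of a power of $7$ (the imaginary part, as in the previous case), I will \emph{add} the two conjugate expressions rather than subtract them. First I factor \eqref{eq:main_b_lemma} over $L$ as
$$(1+a\rs)(1-a\rs) = b^n.$$
Primitivity together with $b$ being even forces $a$ odd, and reducing modulo $7$ forces $\gcd(b,7)=1$. Any prime ideal dividing both factors therefore divides their sum $2$, so lies above $2$.

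Next, using class number one and $\OL^\times=\{\pm 1\}$, I write
$$1+a\rs = \pm\pi_2^{c_1}\bpi_2^{c_2}\gamma^n,\qquad \gamma\in\OL,$$
where $c_1+c_2 = n\,\ord_2(b)$ and $\gamma$ is coprime to $2$. The common-divisor analysis forces $\min(c_1,c_2)\le 1$, so after possibly conjugating I may take $c_2\in\{0,1\}$; then $c_1\equiv -c_2\pmod n$, i.e., $c_1\equiv 0$ or $n-1\pmod n$. Absorbing $n$-th powers of $\pi_2$ into $\gamma$ and setting $\alpha=2\pi_2^{n-2}=\pi_2^{n-1}\bpi_2$, both cases collapse to
$$1+a\rs = \pm\alpha^e(u+v\pi_2)^n,\qquad e\in\{0,1\},$$
with $u,v\in\ZZ$ (since $\OL=\ZZ[\pi_2]$). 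Conjugating and now \emph{adding} the pair of equations, the $a\rs$ contributions cancel and, after dividing by $2$, I obtain the Thue relation claimed.

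Finally I check $\gcd(u,v)=1$: if an odd rational prime $p$ divides both, then $p^n\mid(u+v\pi_2)^n$; since $\alpha^e$ involves only the primes above $2$, $p^n$ then divides $1+a\rs$ and hence also its conjugate, forcing $p^n\mid 2$, impossible. For $p=2$ the extra factor of $\pi_2\bpi_2$ dividing $u+v\pi_2$ would push the $\bpi_2$-valuation of $1+a\rs$ past the bound $c_2\le 1$, again impossible. I expect the only real subtlety to be this bookkeeping of the two $2$-adic valuations at $\pi_2$ and $\bpi_2$---making sure the normalization with $0\le c_i<n$ produces exactly the factor $\alpha^e$ and keeps $u,v$ coprime---while the rest of the argument is a direct transcription of Proposition \ref{prop:ThueMahler}.
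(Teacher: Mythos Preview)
Your proof is correct and follows exactly the approach the paper intends: the paper's own ``proof'' of this proposition is the single sentence ``Working similarly with the equation \eqref{eq:main_b_lemma} we have the following proposition,'' and your write-up is a faithful execution of that template---factor over $L$, control the common ideal divisor at the primes above $2$, use class number one and the unit group to extract an $n$-th power up to the factor $\alpha^e$, and then add (rather than subtract) the conjugate expressions to isolate the real part $1$. Your bookkeeping of the $\pi_2,\bpi_2$-valuations and the coprimality of $u,v$ is exactly the kind of detail the paper suppresses.
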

 
Methods of solving Thue equations have been developed by Tzanakis and de Weger \cite{TzanakisWeger89} and Bilu and Hanrot and implemented in \texttt{Pari} and \texttt{Magma} \cite{Magma}. On the other hand, even though there are methods of solving Thue-Mahler equations, which have also been developed by Tzanakis and de Weger \cite{TzanakisWeger92}, there was no a general implementation to any of the known software packages. Very recently Adela Gherga has written \texttt{Magma} code implementing the method in \cite{TzanakisWeger92} for her thesis, together with some important improvements, and parts of this work will be included in \cite{GhergaKanelMatschkeSiksek}.

\vspace*{0.3cm}
\subsection{The case $y=2$}

Because of a technical step when we apply the modular method we have to exclude the case $y=2$. We do that in the following proposition.

\begin{proposition}\label{prop:b_equal_2}
The solutions of equation
\begin{equation}
x^2 + 7^{2k+1}=2^n,
\end{equation}
are $(|x|,k,n)=\{(1,0,3), (3, 0, 4), (5, 0, 5), (11, 0, 7), (13, 1, 9), (181, 0, 15)\}$.
\end{proposition}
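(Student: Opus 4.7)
The plan is to work in $\OL = \ZZ[\pi_2]$, factor the equation over $L$, and translate the problem into a question about a binary linear recurrence.

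First I would note that $x$ must be odd and coprime to $7$, then factor
\[ (x + 7^k\rs)(x - 7^k\rs) = 2^n = \pi_2^n \bpi_2^n. \]
Since $\rs \equiv 1 \pmod{\pi_2}$ and $\pmod{\bpi_2}$, divisibility of the left factor by each of $\pi_2, \bpi_2$ reduces to $2 \mid x + 7^k$, which holds; so both $\pi_2$ and $\bpi_2$ divide $x + 7^k\rs$. On the other hand, the greatest common divisor of the two factors divides $2 \cdot 7^k \rs$, has trivial $\rs$-valuation (because $7 \nmid x$), and therefore divides $(2) = (\pi_2 \bpi_2)$. Combining this valuation data with class number one and the unit group $\{\pm 1\}$, I obtain $x + 7^k \rs = \pm 2\, \pi_2^{n-2}$ or $\pm 2\, \bpi_2^{n-2}$; the two cases give the same triples $(|x|, k, n)$, so I take the first.

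Next I would expand $\pi_2^s = \alpha_s + \beta_s \pi_2$ with $\alpha_s, \beta_s \in \ZZ$. The relation $\pi_2^2 = \pi_2 - 2$ yields $\beta_0 = 0$, $\beta_1 = 1$, $\beta_{s+1} = \beta_s - 2\beta_{s-1}$ (the Lucas sequence for $(P, Q) = (1, 2)$). Using $2\pi_2 = 1 + \rs$ and comparing coefficients of $\rs$, the previous display becomes
\[ 7^k = \pm\, \beta_{n-2}, \qquad x = \pm(2\alpha_{n-2} + \beta_{n-2}). \]
So the proposition is equivalent to classifying every integer $s = n-2 \geq 1$ such that $|\beta_s|$ is a power of $7$.

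For $|\beta_s| = 1$ (equivalently $k = 0$) this is the classical Ramanujan--Nagell equation $x^2 + 7 = 2^n$, whose complete solution $n \in \{3,4,5,7,15\}$ translates into $s \in \{1,2,3,5,13\}$ and recovers the five listed $k = 0$ triples. For $k \geq 1$, reduction modulo $7$ shows the rank of apparition of $7$ in $(\beta_s)$ is $7$, so $s = 7m$; since $\gcd(Q, 7) = 1$, the lifting-the-exponent formula for Lucas sequences gives $v_7(\beta_{7m}) = 1 + v_7(m)$, and hence $|\beta_{7m}| = 7^k$ forces $\beta_{7m}$ to have no prime divisor other than $7$. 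By the Bilu--Hanrot--Voutier theorem every $\beta_s$ with $s > 30$ admits a primitive prime divisor, which cannot be $7$ when $s \neq 7$; this rules out all $m$ with $7m > 30$. The remaining indices $s \in \{14, 21, 28\}$ are dispatched by direct computation ($|\beta_{14}| = 91 = 7\cdot 13$, $|\beta_{21}| = 287 = 7\cdot 41$, $|\beta_{28}| = 7917 = 3 \cdot 7 \cdot 13 \cdot 29$), leaving only $m = 1$: $\beta_7 = 7$ together with $2\pi_2^7 = -13 + 7\rs$ produces $(|x|, k, n) = (13, 1, 9)$. The main obstacle is the large-$s$ analysis; BHV gives a clean unconditional route, although one could alternatively use $p$-adic linear forms in two logarithms at the cost of more technical estimates.
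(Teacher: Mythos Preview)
Your argument is correct and complete, and it is genuinely different from the paper's proof. The paper factors over $L$ as you do, but then subtracts the conjugate to obtain an $S$-unit equation
\[
1 = (\pm 1)\,\pi_2^{e_1-1}\bpi_2^{e_2-1}(\rs)^{-2k-1} + (\pm 1)\,\pi_2^{e_2-1}\bpi_2^{e_1-1}(\rs)^{-2k-1},
\]
and invokes the machinery of Smart and Tzanakis--de Weger to bound $e_1,e_2,2k+1\le 646$, finishing with a finite search. Your route instead pins down $\min(e_1,e_2)=1$ directly, rewrites the problem as $7^k=\pm\beta_{n-2}$ for the Lucas sequence $\beta_{s+1}=\beta_s-2\beta_{s-1}$, and then splits into the classical Ramanujan--Nagell theorem for $k=0$ and the Bilu--Hanrot--Voutier primitive divisor theorem for $k\ge 1$. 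The trade-off is that the paper's method is algorithmic and self-contained once one has an $S$-unit solver, while your argument is shorter and more conceptual but outsources the hard work to two nontrivial external theorems (Nagell's 1948 result and BHV). Both are perfectly legitimate; yours is arguably the cleaner exposition.

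Two minor remarks. First, a harmless slip: $\rs=2\pi_2-1\equiv -1\pmod{\pi_2}$ (not $+1$), though $\rs\equiv 1\pmod{\bpi_2}$; either way $x\pm 7^k$ is even, so the divisibility conclusion stands. Second, the lifting-the-exponent formula you quote is correct but superfluous: once $|\beta_{7m}|=7^k$, the absence of other prime factors is immediate, and BHV (for $7m>30$) together with the direct checks at $s\in\{14,21,28\}$ already finish the job.
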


\begin{proof}
Suppose $a$ is an odd integer which is a solution of the above equation for some $k$ and $n$,
$$a^2+7^{2k+1}=2^n.$$
Factorizing over $L$ the last equation we have
\begin{equation}
(a + 7^k\rs)(a - 7^k\rs) = 2^n.
\end{equation}
Hence, we understand that there exist $e_1,e_2\in\ZZ$ such that
\begin{equation}
a + (\rs)^{2k+1} = (\pm 1)\cdot\pi_2^{e_1}\cdot\bpi_2^{e_2}.
\end{equation}
We have to mention that $e_1 + e_2 = n$ and $e_1,e_2\geq 0$. Taking conjugate of the above equation and subtracting we end up with the equation
\begin{align*}
& 2\cdot (\rs)^{2k+1} = (\pm 1)\cdot\pi_2^{e_1}\cdot\bpi_2^{e_2}+ (\pm 1)\cdot\pi_2^{e_2}\cdot\bpi_2^{e_1} \Leftrightarrow\\
1 & = (\pm 1)\cdot\pi_2^{e_1-1}\cdot\bpi_2^{e_2-1}\cdot(\rs)^{-2k-1}+ (\pm 1)\cdot\pi_2^{e_2-1}\cdot \bpi_2^{e_1-1}\cdot (\rs)^{-2k-1}.
\end{align*}
The last equation is an $S$-unit equation. Based on the work of Smart \cite{Smart95,Smartbook}and Tzanakis and de Weger \cite{TzanakisWeger92,TzanakisWeger89} we can find an upper bound of $e_1$, $e_2$ and $2k+1$. Actually we have $e_1, e_2, 2k+1\leq 646$. Then, after an elementary computation we have the result.
\end{proof}

\section{Small exponents}

As we have seen in Section \ref{sec:Thue_ThueMahler} for a fix exponent $n$ the solution of equations \eqref{eq:main_a_lemma} and \eqref{eq:main_b_lemma} is reduced to the solution of Thue and Thue-Mahler equations. In this section we consider the cases $n=3,4,5$ and $7$. For $n=7$ we also assume $k\equiv 1\pmod 3$.

\begin{lemma}\label{lem:ThueMahler_3_4_5_7}
The non-trivial primitive solutions of \eqref{eq:main_a_lemma} for $n=3,4,5$ or $n=7$ with $k\equiv 1\pmod 3$ are 
\begin{align*}
(|x|,|y|,k,n) = & (1,2,0,3),(13,8,1,3),(147,28,1,3),(1911,154,1,3),\\
 & (3,2,0,4),(5,2,0,5),(181,8,0,5).
\end{align*}
\end{lemma}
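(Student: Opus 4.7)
The unifying strategy is, for each fixed $n \in \{3, 4, 5, 7\}$, to apply Proposition~\ref{prop:ThueMahler}, thereby reducing the search for primitive solutions of \eqref{eq:main_a_lemma} to the resolution of finitely many Thue-Mahler equations in $(u, v) \in \ZZ^2$, and then to solve those equations using existing computer algebra.

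Concretely, for each exponent $n$ I would apply Proposition~\ref{prop:ThueMahler} with $\alpha = 2\pi_2^{n - 2}$ and each $e \in \{0, 1\}$. Expanding
$$\pm 7^k = \frac{\alpha^e (u + v\pi_2)^n - \bar\alpha^e (u + v\bpi_2)^n}{2\rs}$$
in powers of $u$ and $v$ produces a binary form identity
$$F_{n, e}(u, v) = \pm 7^k,$$
where $F_{n, e} \in \ZZ[u, v]$ is homogeneous of degree $n$. This is a genuine Thue-Mahler equation in the two unknowns $(u, v)$ with the variable exponent $k$ of $7$ on the right hand side.

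For $n = 3, 4, 5$ I would feed each of the (at most four, up to sign and the choice of $e$) Thue-Mahler equations into Gherga's forthcoming \texttt{Magma} implementation cited in the excerpt, and thus obtain the complete finite list of integer triples $(u, v, k)$. For each such triple I would then recover $(x, y, k)$ via the substitution $a + 7^k\rs = \pm\alpha^e(u + v\pi_2)^n$, discard the non-primitive and trivial outputs, and check that the resulting list of $(|x|, y, k)$ matches the one claimed in the statement. For $n = 4$ an entirely elementary shortcut is also available: since $(x, y) = 1$ and $7^{2k+1}$ is odd, the factorisation $(y^2 - x)(y^2 + x) = 7^{2k+1}$ forces $y^2 - x = 1$ and $y^2 + x = 7^{2k+1}$, so $2y^2 = 1 + 7^{2k+1}$, and a short congruence analysis modulo a well chosen small prime isolates $k = 0$ and recovers $(|x|, y, k) = (3, 2, 0)$ without invoking any Thue-Mahler solver.

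The case $n = 7$ with $k \equiv 1 \pmod 3$ is the most computationally expensive, since degree-$7$ Thue-Mahler equations sit at the frontier of what current implementations can handle. I would nevertheless apply the same reduction and run the (small number of) resulting equations through the Thue-Mahler solver, using the congruence hypothesis $k \equiv 1 \pmod 3$ to prune the search space by forcing $7^{2k + 1}$ to be a perfect cube $(7^{(2k+1)/3})^3$ and hence restricting the admissible $7$-adic behaviour of $F_{7, e}(u, v)$. The main obstacle throughout the lemma is this computational cost; the mathematical contribution is really the careful derivation of the explicit forms $F_{n, e}$, with their resolution being delegated to the established algorithmic tools.
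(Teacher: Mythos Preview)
Your overall reduction via Proposition~\ref{prop:ThueMahler} matches the paper, but the treatment of the individual exponents diverges in important ways.

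For $n=7$ with $k\equiv 1\pmod 3$ the paper does \emph{not} attempt any degree-$7$ Thue--Mahler equation. The whole point of the congruence hypothesis is that $3\mid 2k+1$, so $7^{2k+1}$ is a perfect cube and any primitive solution of $x^2+7^{2k+1}=y^7$ yields a primitive solution of $X^2+Y^3=Z^7$; one then simply quotes the full classification of Poonen--Schaefer--Stoll and observes that none of their $Y$'s is a $7$th power. Your plan to run degree-$7$ Thue--Mahler equations through a solver may or may not terminate in practice, and in any case misses this elegant shortcut; this is the one place where I would call your proposal a genuine gap rather than just a different route.

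For $n=5$ and $n=3$ your approach is coarser than the paper's. In each case the $e=0$ equation visibly factors as $2\cdot 7^k=v\cdot g(u,v)$, and the paper exploits $\gcd(u,v)=1$ to split into $v=\pm 2$ or $v=\pm 7^k$; the latter dies by a congruence, and for $n=5$, $v=2$ reduces to an $S$-unit equation over a quartic field which bounds $k$ before a finite check. Only the remaining $e=1$ equations are sent to Gherga's solver. Feeding everything to the solver would presumably still succeed here, but the paper's casework is what actually appears.

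For $n=4$ your factorisation $(y^2-x)(y^2+x)=7^{2k+1}$ and the conclusion $2y^2=1+7^{2k+1}$ agree with the paper, but the claim that ``a short congruence modulo a well chosen small prime isolates $k=0$'' is optimistic: the paper instead invokes \cite[Theorem~1.1]{BennettSkinner04} (a modular-method result) to rule out $k\ge 1$. I do not see an elementary congruence that does this, so you should either exhibit one or cite the same theorem.
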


\begin{proof}
For the case $n=7$ we have made the assumption $k\equiv 1\pmod 3$. In \cite{PoonenSchaeferStoll07} the authors compute the primitive solutions of the equation $x^2+y^3=z^7$. None of the solutions has $y$ a power of $7$, hence \eqref{eq:main_a_lemma} has no solutions for $n=7$ with $k\equiv 1\pmod 3$.

Suppose $n=5$. According to Proposition \ref{prop:ThueMahler} we have to solve the following Thue-Mahler equations
\begin{align}
2\cdot 7^k & = v(5u^4 + 10u^3v - 10u^2v^2 - 15uv^3 - v^4),\\
7^k & = -u^5 - 15u^4v - 10u^3v^2 + 50u^2v^3 + 35uv^4 - 3v^5.
\end{align}
where $u,v\in\ZZ$ are as in Proposition \ref{prop:ThueMahler}. We recall that $u,v$ are coprime. For the first equation we conclude that either $v=\pm 2$ and $5u^4 + 10u^3v - 10u^2v^2 - 15uv^3 - v^4 = \pm 7^k$ or $v=\pm 7^k$ and $5u^4 + 10u^3v - 10u^2v^2 - 15uv^3 - v^4=\pm 2$. Because of the even degree of the above polynomial we can assume that either $v=2$ or $v=7^k$ respectively. It is enough to consider the latter equation $\pmod 5$ to conclude that there are no solutions. For the case $v=2$ we have the solution $(u,k)=(-1,2)$. So, any effort to prove the non-existence of solutions will fail. However, we can bound $k$ reducing the problem to an $S$-unit equation where $k$ appears as an exponent of one of the generators. Because $v=2$ we actually have to solve the equation 
\begin{equation}
5u^4 + 20u^3 - 40u^2 - 120u - 16 = \pm 7^k.
\end{equation}
We define $g(u)=5u^4 + 20u^3 - 40u^2 - 120u - 16$ and $K = \QQ(\theta)$ where $\theta^4 + 20\theta^3 - 200\theta^2 - 3000\theta - 2000=0$. Using \texttt{Sage} we know that the extension $K/\QQ$ is Galois because the minimal polynomial of $\theta$ completely splits over $K$, $K$ has class number one, there is only one prime ideal $\fp_7$ above $7$ with ramification index $2$ and $g$ splits completely over $K$. We denote by $\rho_i$ the four roots of $g$ which all lie in $K$ and $\pi_7$ a uniformizer of $\fp_7$ which we fix. Then there exist units $\epsilon_i$ such that $u-\rho_i=\epsilon_i\pi_7^{k_i}$ where $k_i$ are non-negative integers with $k_1+k_2+k_3+k_4=2k$. Because $\Gal(K/\QQ)$ acts transitively over $\rho_i$'s and $u$ is an integer we understand that all $k_i$'s are the same and equal to $k/2$. Without loss of generality we have 
$$
u - \rho_1 = u - \theta/5=\epsilon_1\pi_7^{k/2}\qquad\text{and}\qquad u-\rho_2=u + \theta/5+2=\epsilon_2\pi_7^{k/2}
$$
Subtracting the last two equation we have,
\begin{equation}
1= \frac{\epsilon_2\pi_7^{k/2} - \epsilon_1\pi_7^{k/2}}{2(\theta/5+1)}. 
\end{equation}
The last equation equation is an $S$-unit equation and similarly to the proof of Proposition \ref{prop:b_equal_2} we prove that $k/2\leq 11157$. As long as we have bounded $k$ we can easily prove that $(u,v,k)=(\pm 1,\mp 2,2)$ are the only solutions. However, they correspond to the trivial solution with $a=0$. 

For the second Thue-Mahler equation $7^k = -u^5 - 15u^4v - 10u^3v^2 + 50u^2v^3 + 35uv^4 - 3v^5$ we asked Adela Gherga to solve the equation using her code and the solutions are the following,
\begin{align*}
(u,v,k) = & ~ ( 2, -1, 0 ), ( -1, 0, 0 ).
\end{align*}
From the solutions of the Thue-Mahler equation we have that the solutions of \eqref{eq:main_a_lemma} for $n=5$ are $(|a|,b)=(5,2),(181,8)$.

Suppose $n=4$ and $(a,b)$ is a non-trivial primitive solution of \eqref{eq:main_a_lemma}. It holds $a^2 + 7^{2k+1}=b^4$ which is rewritten by
\begin{equation}
(b^2 - a)(b^2 + a) = 7^{2k+1}.
\end{equation}
Because $a,b$ are coprime we understand that there exist positive integers $t,s$ such that $b^2-a=7^s$ and $b^2+a=7^t$ with $s+t=2k+1$. This implies that $2b^2=7^t+7^s$ and $2a=7^t-7^s$. Because $a,b$ are coprime we conclude that either $t=0$ or $s=0$, so $2b^2 = 7^{2k+1} + 1$. From \cite[Theorem 1.1]{BennettSkinner04} we have that there is no solution for $k>1$, thus we conclude there is only one solution $(|a|,|b|)=(3,2)$ for $k=0$.

Finally, we consider the case $n=3$. According to Proposition \ref{prop:ThueMahler} we have to solve the following Thue-Mahler equations
\begin{equation}
2\cdot 7^k = v (3u^2 + 3uv - v^2)\qquad \text{and}\qquad 7^k=u^3 + 3u^2v - 3uv^2 - 3v^3,
\end{equation}
where $u,v\in\ZZ$ are as in Proposition \ref{prop:ThueMahler}. We recall that $u,v$ are coprime. For the first equation we conclude that either $v=\pm 2$ and $3u^2 + 3uv - v^2 = \pm 7^k$ or $v=\pm 7^k$ and $3u^2 + 3uv - v^2=\pm 2$. Like the case $n=5$ we can assume that either $v=2$ or $v=7^k$ respectively. We can exclude the case $v=7^k$ by taking the equation either $\pmod 7$ or $\pmod 2$ and the case $v=2$ with the plus sign by taking the equation $\pmod 3$. Finally, for the case $v=2$ and $3u^2 + 3uv - v^2 = -7^k$ we have $3u^2+6u-4=3(u+1)^2-7=-7^k$ from which we understand that $k\leq 1$. For $k\leq 1$ there is the solution $(u,v,k)=(-1,2,1)$ which corresponds to the trivial solution $a=0$.

For the second Thue-Mahler equation $7^k=u^3 + 3u^2v - 3uv^2 - 3v^3$, we asked Adela Gherga to solve the equation using her code and the solutions are the following,
\begin{align*}
(u,v,k) = & ~(-4, 1, 1), (-1, 2, 1),(5, 4, 1), (4, -1, 1), (1, -2, 1), (-5, -4, 1), \\ 
& ~(2, -3, 0), ( -1, 0, 0), (-2, 1, 1), ( -2, 3, 0 ), ( 1, 0, 0 ), ( 2, -1, 1 ).
\end{align*}
From the solutions of the Thue-Mahler equation we have that the solutions of \eqref{eq:main_a_lemma} for $n=3$ are $(|a|,b)=~(1,2),(13,8),(147,28),(181,64),(1911,154)$ of \eqref{eq:main_a_lemma}.
\end{proof}

\begin{lemma}\label{lem:Thue_3_4_5_7}
The non-trivial primitive solutions of \eqref{eq:main_b_lemma} for $n=3,4,5,7$ are $(|x|,n)= (3,3),~(1,3),~(39,3)$.
\end{lemma}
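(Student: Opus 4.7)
My plan is to apply Proposition~\ref{prop:Thue} for each $n\in\{3,4,5,7\}$, which reduces the search for non-trivial primitive solutions of $7x^2+1=y^n$ to a finite list of honest Thue equations of degree $n$ over $\ZZ$. More precisely, any such solution $(a,b)$ yields coprime integers $u,v$ and an exponent $e\in\{0,1\}$ satisfying
$$\pm 1=\frac{\alpha^e(u+v\pi_2)^n+\bar\alpha^e(u+v\bpi_2)^n}{2},\qquad \alpha=2\pi_2^{n-2}.$$
Expanding $\pi_2=(1+\rs)/2$ and collecting terms produces, for each pair $(n,e)$, a binary form $F_{n,e}(u,v)\in\ZZ[u,v]$ of degree $n$ equated to $\pm 1$ or $\pm 2$. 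For example, for $n=3$ the two equations are $2u^3+3u^2v-9uv^2-5v^3=\pm 2$ (from $e=0$) and $u^3-9u^2v-15uv^2+v^3=\pm 1$ (from $e=1$).

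The core step is then to solve the resulting eight Thue equations. In contrast with the Thue--Mahler situation of Lemma~\ref{lem:ThueMahler_3_4_5_7}, Thue-equation solvers are by now thoroughly standard; the routines built into \texttt{Pari} and \texttt{Magma} that implement the algorithms of Tzanakis--de Weger and Bilu--Hanrot handle each equation directly. For every integer pair $(u,v)$ returned I would recover the candidate $a$ by reading off the coefficient of $\rs$ in the identity $\pm\alpha^e(u+v\pi_2)^n=1+a\rs$, verify that $7a^2+1$ is a genuine $n$-th power of an even integer, and discard the trivial outcome $a=0$.

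For $n=4$ I would use instead a quick elementary factorization as a sanity check: since $b=y$ is even, writing $7a^2=(b^2-1)(b^2+1)$ gives two coprime odd factors, so up to order they must equal a square and $7$ times a square; each subcase forces a Pell-type relation $d^2-y^2=\pm 1$ with no non-trivial solution, bypassing the degree-$4$ Thue equation entirely. The principal computational obstacle I foresee is the degree-$7$ pair of equations arising from $n=7$: the associated number field has unit rank large enough that the final enumeration (via LLL-type lattice reduction and sieving) is the delicate stage, so independent verification in both \texttt{Pari} and \texttt{Magma} will be prudent. Once all eight equations are solved and the trivial solution discarded, I expect exactly three surviving tuples, namely $(|x|,n)=(1,3),(3,3),(39,3)$, matching the lemma's statement.
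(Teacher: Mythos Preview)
Your proposal is correct and follows essentially the same route as the paper: invoke Proposition~\ref{prop:Thue} to reduce each $n\in\{3,4,5,7\}$ to a pair of integral Thue equations and then solve these with the standard \texttt{Pari}/\texttt{Magma} routines. The paper's own proof is in fact terser than yours (it simply cites a \texttt{Sage}/\texttt{Pari} script), so your added explicit forms for $n=3$, the elementary factorization shortcut for $n=4$, and the remark on the degree-$7$ computation are extra detail rather than a different argument.
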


\begin{proof}
According to Lemma \ref{prop:Thue} the solution of \eqref{eq:main_b_lemma} for fix $n$ is reduced to the solution of Thue equations. We have written a \texttt{Sage} program (based on functions by \texttt{Pari}) that solves the Thue equations and computes the solutions.
\end{proof}

With Lemmas \ref{lem:ThueMahler_3_4_5_7} and \ref{lem:Thue_3_4_5_7} we have determined the solutions in Theorem \ref{thm:main} for the `small' exponents. For the rest of paper we apply the modular method and the symplectic argument to prove the non-existence of solutions for the `big' exponents under the assumptions of Theorem \ref{thm:main}.

\section{Primitive solutions of $x^2+7^{2k+1}=y^n$}\label{sec:case_a}

In this section we determine the non-trivial primitive solutions of equation
\begin{equation}\label{eq:main_a}
x^2+7^{2k+1}=y^n,
\end{equation}
with $k\geq 0$, $n\geq 11$ an odd prime and $y$ even. Actually, we prove the following theorem.

\begin{theorem}\label{thm:case_a}
There are no non-trivial primitive solutions of \eqref{eq:main_a} with $y$ even and $n$ an odd prime with $n\geq 11$ in the following two cases
\begin{itemize}
\item $n\equiv 13,17,19,23\pmod{24}$ and $k\not\equiv 1\pmod 3$,
\item $n\equiv 5,7,13,23\pmod{24}$ and $k\equiv 1\pmod 3$.
\end{itemize}
\end{theorem}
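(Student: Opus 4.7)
The plan is to combine the parametrization from Proposition \ref{prop:ThueMahler} with the modular method, sharpened by the symplectic argument. Starting from a putative non-trivial primitive solution $(a,b)$ with $b$ even and $n\geq 11$ prime (so $b\geq 4$ by Proposition \ref{prop:b_equal_2}), Proposition \ref{prop:ThueMahler} furnishes coprime integers $u,v$ and $e\in\{0,1\}$ with $a+7^k\rs=\pm\alpha^e(u+v\pi_2)^n$ where $\alpha=2\pi_2^{n-2}$. Expanding the right-hand side yields a ternary identity of Frey--Hellegouarch type, from which I would extract two Frey curves over $\QQ$: a curve $E_1$ of signature $(p,p,2)$ built in the spirit of Bennett--Skinner from the binary form that expresses $a$ and $7^k$ as linear combinations of $(u+v\pi_2)^n$ and $(u+v\bpi_2)^n$; and, available precisely when $3\mid(2k+1)$ (i.e.\ $k\equiv 1\pmod 3$), a curve $E_2$ of signature $(2,3,p)$ constructed in Kraus's style from the identity $a^2+(7^{(2k+1)/3})^3=b^n$. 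The split of Theorem \ref{thm:case_a} into two clauses will correspond to these two Frey curves.

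After computing the conductors $N_{E_i}$ (both supported at $\{2,7\}$), I would apply modularity and Ribet's level lowering at the prime $n$. Irreducibility of $\rho_{E_i,n}$ for $n\geq 11$ follows from Mazur's theorem once the rational isogeny structure of each $E_i$ is pinned down. Level lowering then identifies $\rho_{E_i,n}$, via Proposition \ref{prop:congruence_criterion}, with $\rho_{f,\fn}$ for some weight-$2$ newform $f$ at a level $N_f$ dividing a small power of $2\cdot 7$. I would eliminate irrational newforms at each such level by running Proposition \ref{prop:congruence_criterion}(i) at a handful of auxiliary primes $\ell$: since $|a_\ell(E_i)|\leq 2\sqrt\ell$ and the $a_\ell(f)$ are explicit algebraic integers, the congruence forces $n$ to divide a computable nonzero norm, which $n\geq 11$ violates for every irrational $f$ on the finite list.

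The surviving rational newforms correspond to explicit elliptic curves $E_f/\QQ$, and on these I would deploy the symplectic argument. Applying the Freitas--Kraus symplectic criteria at two auxiliary primes of common multiplicative reduction for $E_i$ and $E_f$ --- the natural candidates being $\ell=2$ and $\ell=7$ --- gives two determinations of the symplectic type of $(E_i,E_f,n)$, each expressible as a Legendre symbol $\left(\tfrac{\cdot}{n}\right)$ whose numerator is a ratio of $\ell$-adic valuations of discriminants. Comparing the two determinations produces an identity of the form $\left(\tfrac{r_1}{n}\right)=\left(\tfrac{r_2}{n}\right)$, and the dependence of $(r_1,r_2)$ on $e$, on $v_2(\Delta_{E_i})$, and on $k\pmod 3$ is what cuts the allowed primes $n$. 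I expect the arithmetic of these symbols modulo $8$ and modulo $3$ to combine so as to force $n$ out of precisely the two four-element subsets of $(\ZZ/24\ZZ)^*$ listed in Theorem \ref{thm:case_a}, producing the desired contradiction.

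The main obstacle will be the symplectic bookkeeping: computing the $\ell$-adic valuations of $\Delta_{E_i}$ and $\Delta_{E_f}$ at $\ell=2$ and $\ell=7$ carefully enough to apply the Freitas--Kraus criteria, and correctly tracking how the exponent $e$ and the residue $k\pmod 3$ split the argument into cases that match the two clauses of the theorem. A secondary difficulty is the prime $2$ for the signature-$(p,p,2)$ curve, where quadratic-twist ambiguity and Tate's algorithm make both the level-lowering step and the symplectic comparison delicate --- precisely the reason that Proposition \ref{prop:b_equal_2} had to dispose of the boundary case $b=2$ in advance, so that the modular method can be applied cleanly here.
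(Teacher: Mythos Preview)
Your overall architecture (two Frey curves, level lowering, symplectic argument) is right, but three specific choices are wrong and would prevent the proof from closing.

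\textbf{First, the $(2,3,n)$ Frey curve exists for every $k$, not only when $3\mid(2k+1)$.} Write $2k+1=3\lambda'+i$ with $i\in\{0,1,2\}$ and attach the curve $E_1':\,Y^2=X^3+3\cdot 7^{\lambda'+i}X+(-1)^{i+1}2\cdot 7^i a$; this is the Barros--Darmon--Granville recipe and works uniformly in $i$. The two clauses of the theorem do \emph{not} correspond to two different Frey curves; they correspond to whether $i=0$ (that is, $k\equiv 1\pmod 3$) or $i\neq 0$ inside the analysis of the single curve $E_1'$. In particular your plan to handle the $k\not\equiv 1\pmod 3$ clause using only the $(n,n,2)$ curve will not succeed: that curve level-lowers to a single rational newform at level $14$ and carries too little local information to run a symplectic comparison by itself.

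\textbf{Second, the symplectic primes are $\ell=2$ and $\ell=3$, not $\ell=2$ and $\ell=7$.} The curve $E_1'$ has \emph{additive} reduction at $3$ (type II or III) and, when $i\neq 0$, additive reduction at $7$; your assumption of ``common multiplicative reduction'' at the auxiliary primes is false. At $\ell=3$ one must invoke the Freitas--Kraus criteria for potentially good additive reduction (their Theorems~5, 10, 11), and the outcome is that $E_1'[n]$ and $E_{f_1'}[n]$ are \emph{always} symplectically isomorphic there. Comparing this with the multiplicative-reduction criterion at $\ell=2$ yields $(-6/n)=1$ for $i\neq 0$ and $(-2/n)=1$ for $i=0$, which is exactly what produces the two residue sets modulo $24$.

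\textbf{Third, the two Frey curves are used jointly (multi-Frey), not separately.} The $(n,n,2)$ curve $E_1$ is what proves $3\nmid a$ (compare $a_3$), and this fact is indispensable input to the symplectic computation for $E_1'$ at $\ell=3$, since it pins down $v_3(\Delta(E_1'))$ and $(\tilde\Delta(E_1')/3)$. Moreover, combining the congruence information from $E_1$ and $E_1'$ (and an inertia comparison at $3$ and $7$) is what eliminates all but a short explicit list of rational newforms $f_1'$ before the symplectic step; using either curve in isolation leaves too many survivors.
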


Suppose $(a,b)$ is a primitive solution of \eqref{eq:main_a} with $2\mid b$. Because of Proposition \ref{prop:b_equal_2} we also assume that $b$ has an odd prime factor. For a solution $(a,b)$ it is very natural to attach a Frey curve of signature $(n,n,2)$ \cite{BennettSkinner04}. However, this Frey curve does not have enough local information to apply the symplectic argument. For this reason we also attach a Frey curve of signature $(2,3,n)$ \cite{DarmonGranville95}. The last Frey curve was studied in details in \cite{Barros10}. The local information of the $(2,3,n)$ Frey-curve is rich enough and together with the powerful multi-Frey technique \cite{BugeaudMignotteSiksek08a,BugeaudMignotteSiksek08b} allow us to give a contradiction.

\vspace*{0.3cm}
\subsection{\textbf{Frey curve of signature $(n,n,2)$:}} First of all, we can assume that $2k+1\not\equiv 0\pmod n$, otherwise the solution $(a,b)$ corresponds to a non-trivial primitive solution of the equation
\begin{equation}
x^2 = y^n + z^n,
\end{equation}
which does not have non--trivial primitive solutions \cite{DarmonMerel97}. 

Suppose $2k+1\not\equiv 0\pmod n$. Because $n\geq 11$ is an odd prime we have $\ord_2(b^n)\geq 11$. Based on the work of Bennett-Skinner \cite{BennettSkinner04} and assuming that $a\equiv 1\pmod 4$ we associate to the solution $(a,b)$ the Frey curve
\begin{equation}
E_1:~Y^2+XY=X^3 + \frac{a-1}{4}X^2+\frac{b^n}{64}X.
\end{equation}
The invariants of $E_1$ are the followings,
\begin{equation}
j(E_1)=2^6\frac{(-4a^2+3b^n)^3}{7^{2k+1}b^{2n}},\qquad \Delta(E_1) = -\frac{7^{2k+1}b^{2n}}{2^{12}},\qquad N(E_1)=7\prod_{p\mid b}p.
\end{equation}
Suppose $\rhoEo$ is the representation of $\Gal(\bar\QQ/\QQ)$ on the $n$--th torsion points $E_1[n]$ of $E_1$.

\begin{lemma}\label{lem:level_lowering_E1}
There exists a rational newform of level $N_1=14$ such that $\rhoEo\simeq\rhofo$.
\end{lemma}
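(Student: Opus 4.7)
The plan is to carry out the standard modular-method chain: modularity, irreducibility, level lowering via Ribet. I first observe that $N(E_1) = 7\prod_{p\mid b}p$ is squarefree, so $E_1$ is a semistable elliptic curve over $\QQ$; in particular it has multiplicative reduction at $2$, at $7$, and at each odd prime dividing $b$. By the modularity theorem of Wiles, Taylor--Wiles and Breuil--Conrad--Diamond--Taylor, $E_1$ is modular, so $\rhoEo$ arises from a rational newform of level $N(E_1)$.

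Next I need to verify that $\rhoEo$ is irreducible, which is the hypothesis required by Ribet's theorem. Because $E_1$ is semistable, Mazur's classification of rational isogenies of prime degree for semistable elliptic curves over $\QQ$ forces any rational prime-order isogeny on $E_1$ to have degree in $\{2,3,5,7\}$. Since $n\geq 11$ is prime, $E_1$ admits no rational $n$-isogeny and $\rhoEo$ is irreducible.

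With modularity and irreducibility in hand, I invoke Ribet's level-lowering theorem: one may strip from the level every prime $p$ at which $E_1$ has multiplicative reduction and $n\mid\ord_p(\Delta(E_1))$ (including the prime $n$ itself, since there the multiplicative reduction plus divisibility of the valuation makes the local representation unramified). For an odd prime $p\mid b$ with $p\ne 7$, we have $\ord_p(\Delta(E_1))=2n\,\ord_p(b)$, divisible by $n$, so $p$ drops. At $p=7$, $\ord_7(\Delta(E_1))=2k+1$, and by assumption $n\nmid 2k+1$, so $7$ remains. At $p=2$, $\ord_2(\Delta(E_1))=2n\,\ord_2(b)-12$, which is not divisible by $n$ for $n\geq 11$, so $2$ remains. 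Hence $\rhoEo$ arises from a newform of level $N_1=14$. The space $S_2(\Gamma_0(14))^{\mathrm{new}}$ is one-dimensional and is spanned by a rational newform $f_1$ (attached to the elliptic curve $14a$), yielding $\rhoEo\simeq\rhofo$.

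The main obstacle in this plan is the clean handling of the irreducibility step: Mazur's semistable isogeny theorem removes it at a stroke here, but in the sequel (when the same conclusion will be needed for the $(2,3,n)$ Frey curve, which need not be semistable) a more delicate argument involving the reduction types and images of inertia will be required. A minor technical point to keep track of is that the sign of $a$ can be flipped freely to arrange $a\equiv 1\pmod 4$, and this does not affect either the conductor computation or the statement $\rhoEo\simeq\rhofo$.
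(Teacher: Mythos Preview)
Your proof is correct and follows essentially the same route as the paper: establish modularity and irreducibility of $\rhoEo$, then apply Ribet's level-lowering to strip the primes of $b$ and land at level $14$, where the unique newform is rational. The only difference is one of presentation: the paper defers the no-CM and irreducibility claims to \cite[Corollaries~2.2 and~3.1]{BennettSkinner04}, whereas you spell them out directly via semistability and Mazur's isogeny theorem, and you make the valuation check at each prime of $N(E_1)$ explicit.
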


\begin{proof}
The curve $E_1$ does not have complex multiplication and $\rhoEo$ is absolutely irreducible \cite[Corollary 3.1, Corollary 2.2]{BennettSkinner04}. By level lowering \cite{Ribet90} we know that there exists a newform $f_1$ of level $N_1 = 14$ such that $\rhoEo\simeq\rhofofn$. However, there is only one newform of level $N_1=14$ which is rational with corresponding elliptic curve
\begin{equation}
E_{f_1}:~ Y^2+YX+Y = X^3+4X-6.
\end{equation}
\end{proof}

\begin{lemma}\label{lem:3_nmid_a}
Suppose $(a,b)$ is a non-trivial primitive solution of \eqref{eq:main_a} with $n\geq 11$. Then $3\nmid a$.
\end{lemma}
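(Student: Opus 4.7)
The plan is to assume, for contradiction, that $3 \mid a$ and reach a contradiction via Proposition \ref{prop:congruence_criterion}(i) applied at the auxiliary prime $\ell = 3$ to the isomorphism $\rhoEo \simeq \rhofo$ of Lemma \ref{lem:level_lowering_E1}. First I have to verify the hypotheses of the proposition. Since $n \geq 11$ is prime, $3 \nmid n$; also $3 \nmid N_{f_1} = 14$. Primitivity of $(a,b)$ combined with $3 \mid a$ forces $3 \nmid b$, so $3 \nmid N_{E_1} = 7\prod_{p\mid b}p$ and $E_1$ has good reduction at $3$. Consequently the congruence
\[
a_3(E_1) \equiv a_3(E_{f_1}) \pmod n
\]
must hold.

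The next step is to compute both traces. Reducing $a^2 + 7^{2k+1} = b^n$ modulo $3$, using $3 \mid a$, $7 \equiv 1 \pmod 3$, and the fact that $n$ is odd, one gets $b \equiv 1 \pmod 3$. Together with $3 \mid a$ this pins down the coefficients $\tfrac{a-1}{4}$ and $\tfrac{b^n}{64}$ of the Frey equation modulo $3$, and a short point count on the reduction $\tilde E_1/\mathbb{F}_3$ yields $a_3(E_1) = 0$. An equally routine count on $E_{f_1}: Y^2+XY+Y = X^3+4X-6$ over $\mathbb{F}_3$ produces $a_3(E_{f_1}) = -2$.

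Substituting these two values into the congruence above gives $0 \equiv -2 \pmod n$, hence $n \mid 2$, which contradicts $n \geq 11$ and completes the proof. There is no real obstacle here beyond choosing the right auxiliary prime: the choice $\ell = 3$ works precisely because the hypothesis $3 \mid a$, together with primitivity, is strong enough to fix the reduction of $E_1$ at $3$ up to the one remaining datum $b \bmod 3$, and that datum is in turn forced by the Diophantine equation itself.
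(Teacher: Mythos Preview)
Your proof is correct and follows the same approach as the paper: assume $3\mid a$, invoke the isomorphism $\rhoEo\simeq\rhofo$ from Lemma~\ref{lem:level_lowering_E1}, and apply Proposition~\ref{prop:congruence_criterion} at $\ell=3$ to compare $a_3(E_1)=0$ with $a_3(f_1)=-2$. You supply the verifications (that $3\nmid b$, that $b\equiv 1\pmod 3$, and the explicit point counts) which the paper leaves implicit, and you obtain the sharper divisibility $n\mid 2$ where the paper records $n\mid 6$; either suffices for the contradiction.
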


\begin{proof}
Suppose $3\mid a$. Then we have that $\rhoEo\simeq\rhofo$ for the unique rational newform $f_1$ of level $N_1=14$. It holds $a_3(f_1)=-2$ while $a_3(E_1)=0$. According to Proposition \ref{prop:congruence_criterion} we understand that $n\mid 6$, which is a contradiction to the assumption $n\geq 11$.
\end{proof}

\vspace*{0.3cm}
\subsection{\textbf{Frey curve of signature $(2,3,n)$:}} At this point we introduce and study the arithmetic of the Frey curve of signature $(2,3,n)$ for a non-trivial primitive solution $(a,b)$ of \eqref{eq:main_a_lemma}. The arithmetic of this curve has been studied in details in \cite{Barros10}. As above we make the assumption that $a\equiv 1\pmod 4$. Because of Lemma \ref{lem:3_nmid_a} we have $3\nmid a$ and assume that $a\equiv 1\pmod 3$. Because $a,b$ are coprime we conclude that $3\nmid b$.

Suppose that $2k+1 = 3\plambda + i$ where $i = 0,1,2$. It holds that $\plambda$ and $i$ do not have the same parity. The associate Frey curve is given by
\begin{equation}
\pEo:~Y^2 = X^3+3\cdot7^{\plambda+i}X +(-1)^{i+1}2\cdot 7^i\cdot a.
\end{equation}
The curve is minimal away from $2$ and the minimal invariants (for $p=2$ this follows from Lemma \ref{lem:conductor_pEo}) of $\pEo$ are
\begin{align*}
j(\pEo) & = 2^6\cdot 3^3\cdot \frac{7^{2k+1}}{b^n},\qquad \Delta(\pEo) = -2^{-6}\cdot 3^3\cdot 7^{2i}\cdot b^n,\\
c_4(\pEo) & = -3^2\cdot 7^{\plambda + i}, \qquad c_6(\pEo) = (-1)^i\cdot 3^3\cdot 7^i\cdot a.
\end{align*}

\begin{lemma}\label{lem:conductor_pEo}
The conductor of $\pEo$ is given by
\begin{equation}
N(\pEo) = \begin{cases} 2\cdot 3^{\epsilon_3}\cdot 7^2\cdot \Rad_{2,3}(b), & i\neq 0, \\ 2\cdot 3^{\epsilon_3}\cdot \Rad_{2,3}(b), & i = 0,\end{cases}
\end{equation}
with $\epsilon_3 = 2,3$. Moreover, the reduction type at $p=3$ is II for $e_3=3$ and III for $e_3=2$.
\end{lemma}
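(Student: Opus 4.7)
The plan is to compute $f_p = v_p(N(\pEo))$ prime-by-prime via Tate's algorithm. First I will collect elementary facts about the factorization of $a$ and $b$: primitivity gives $\gcd(a,b)=1$, so since $7\mid 7^{2k+1}$ and $b$ is even we get $\gcd(b,14)=1$, and from Lemma~\ref{lem:3_nmid_a} together with the normalisation $a\equiv 1\pmod 3$ I also obtain $3\nmid b$. It follows that for every prime $p\ge 5$ with $p\ne 7$ dividing $b$ we have $v_p(c_4)=0$ but $v_p(\Delta)>0$, whence multiplicative reduction and $f_p=1$; this accounts for the factor $\Rad_{2,3}(b)$ in the stated formula.

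Next I would handle $p=7$. If $i=0$ then $v_7(\Delta)=0$ and the reduction is good. If $i\in\{1,2\}$ then $v_7(c_4)=\lambda'+i\ge 1$, $v_7(c_6)=i$ and $v_7(\Delta)=2i\in\{2,4\}$; running Tate's algorithm (or consulting the Papadopoulos tables, since $7>3$) identifies the Kodaira type as II when $i=1$ and IV when $i=2$, both yielding $f_7=2$. This accounts for the factor $7^2$ that appears only for $i\ne 0$.

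The analysis at $p=3$ is where the two possible values $\epsilon_3\in\{2,3\}$ appear. Here $v_3(c_4)=2$, $v_3(c_6)=3$ (using $3\nmid a$) and $v_3(\Delta)=3$, so the model is already minimal at $3$ (since $v_3(\Delta)<12$) and the reduction is additive. My plan is to translate the unique singular point of the mod-$3$ special fibre, which lies at $Y=0$, $X\equiv (-1)^i\cdot 2a\pmod 3$, to the origin, then run Tate's algorithm and distinguish Kodaira type II (giving $f_3=3$) from type III (giving $f_3=2$) according to a congruence condition on $a$ and on $7^{\lambda'+i}$ modulo higher powers of $3$. Finally, at $p=2$ the stated model is \emph{not} minimal, since $v_2(c_4)=4$, $v_2(c_6)=6$ and $v_2(\Delta)=6+nv_2(b)$; however, $v_2(j)=6-nv_2(b)<0$ whenever $n\ge 11$ and $v_2(b)\ge 1$, so $\pEo$ has potentially multiplicative reduction. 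I will produce an explicit integral change of variables of the form $X=4X'+r$, $Y=8Y'+4sX'+t$ with $r,s,t$ chosen modulo suitable powers of $2$ (equivalently, verify Kraus' criterion), obtaining a minimal Weierstrass model with $v_2(c_4^{\min})=0$ and $v_2(\Delta^{\min})=nv_2(b)-6>0$, whence $f_2=1$.

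The main obstacle will be the analysis at $p=3$: the pair $(v_3(c_4),v_3(\Delta))=(2,3)$ is a borderline case that is not resolved by the universal Papadopoulos tables alone, so one must inspect the coefficients of the translated model modulo $9$ (and possibly $27$) to decide between types II and III, tracking how $a\bmod 9$ interacts with $7^{\lambda'+i}\bmod 9$. The $p=2$ step is secondary but also delicate, because the short Weierstrass form cannot be made minimal by a single scaling by $u=2$ and genuinely requires a translation introducing $a_1$ and $a_3$ terms.
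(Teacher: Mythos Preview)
Your plan is essentially the paper's own approach: compute $f_p$ prime by prime from the invariants for $p\ge 5$, run Tate's algorithm at $p=3$ (where indeed $(v_3(c_4),v_3(c_6),v_3(\Delta))=(2,3,3)$ is the borderline II/III case, resolved via Papadopoulos's tables), and at $p=2$ exhibit an integral change of variables that drops $v_2(c_4)$ by $4$ to conclude multiplicative reduction. One slip: you write ``$\gcd(b,14)=1$'', which is false since $b$ is even; what you need (and surely meant) is $7\nmid b$, which does follow from primitivity. For the $p=2$ step, rather than guessing $r,s,t$ directly, the paper follows Poonen--Schaefer--Stoll: working in $\ZZ_2$, set $t=a/7^{\lambda'}$ and use $a^2+7^{2k+1}\equiv 0\pmod{2^7}$ to factor the right-hand side of the short Weierstrass model as $(X-2t)(X+t)^2\pmod{2^7}$; then $X\mapsto X-t$ followed by $Y\mapsto Y+X$ (using $t\equiv 1\pmod 4$, which comes from $a\equiv 1\pmod 4$) visibly exhibits non-minimality, yielding $v_2(c_4^{\min})=0$. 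This gives you the explicit translation you were going to search for.
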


\begin{proof}
For $p\geq 5$ this follows from the invariants of $\pEo$. For $p=3$ we carry out Tate's algorithm for $\pEo$ together with \cite[TABLEAU II]{Papadopoulos93} and we compute $\epsilon_3$ and the reduction type (see also \cite[Table 5.8]{Barros10}). 

The case $p=2$ is more complicated and we use the ideas of the proof in \cite[Lemma 4.6]{PoonenSchaeferStoll07}. We recall that we have assumed $a\equiv 1\pmod 4$ and $2|b$. We also assume that $i\equiv 1\pmod 2$. Because $2\nmid a$ there exists $t=a/7^{\plambda}\in\ZZ_2$. Because $a^2 + 7^{3\plambda + i}\equiv 0\pmod{2^7}$, this implies that $(a,7^{\plambda})\equiv(-t^3/7^i,-t^2/7^i)\pmod{2^7}$. The right-hand side of the Weierstrass model of $\pEo$ is congruent to $(X-2t)(X+t)^2\pmod{2^7}$, hence the substitution $X\mapsto X-t$ gives a cubic polynomial congruent to $X^3-3tX^2\pmod{2^7})$. Because $a\equiv 1\pmod 4$ and $i\equiv 1\pmod 2$ we have $t\equiv 1\pmod 4$. The substitution $Y\mapsto Y+X$ shows that the model is not minimal. Thus for the minimal model we have $v_2(c_{4,min})=v_2(c_4(\pEo)) - 4=0$, so $\pEo$ has multiplicative reduction at $2$. The case $i\equiv 0\pmod 2$ is similar.
\end{proof}

Because $b\neq 1$ we understand that $\pEo$ does not have complex multiplication. Suppose $\rhopEo$ is the representation of $\Gal(\bar\QQ/\QQ)$ on the $n$--th torsion points $\pEo[n]$ of $\pEo$.

\begin{lemma}\label{lem:irreducible_pEo}
The representation $\rhopEo$ is irreducible for $n\geq 11$.
\end{lemma}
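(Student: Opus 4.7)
The plan is a proof by contradiction: assume $\rhopEo$ is reducible, so $\pEo$ admits a $\QQ$-rational cyclic subgroup $C$ of order $n$. The first step is to verify that $\pEo$ has no complex multiplication by inspecting its $j$-invariant
$$
j(\pEo)=2^{6}\cdot 3^{3}\cdot\frac{7^{2k+1}}{b^{n}}.
$$
Because $b$ is even, $b\neq 2$ (by Proposition \ref{prop:b_equal_2}), and $\gcd(b,21)=1$ by the normalisations preceding the lemma, either $b$ is a pure power of $2$ of value at least $4$, in which case $v_{2}(b^{n})\geq 2n\geq 22>6$, or $b$ has a prime factor $p\geq 5$ with $p\neq 7$; in both cases a prime appears with negative valuation in $j(\pEo)$, so $j(\pEo)\notin\ZZ$ and $\pEo$ is non-CM.

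Mazur's classification of prime-degree $\QQ$-rational isogenies on non-CM elliptic curves then restricts
$$
n\in\{11,13,17,19,37,43,67,163\}.
$$
For $n\in\{19,37,43,67,163\}$ every such isogeny occurs only on CM curves, contradicting the previous step. For $n=11$ the two non-CM $j$-invariants $-11^{2}$ and $-11\cdot 131^{3}$ are integers and cannot equal the non-integer $j(\pEo)$. For $n=17$ the two possibilities are $-17^{2}\cdot 101^{3}/2$ and $-17\cdot 373^{3}/2^{17}$; their $2$-adic valuations $-1$ and $-17$ cannot equal $6-17\,v_{2}(b)$ for any integer $v_{2}(b)\geq 1$, so neither matches $j(\pEo)$.

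The main obstacle is $n=13$, because $X_{0}(13)$ has genus zero and parametrises infinitely many rational points. Here I would bring in the isogeny character $\lambda\colon\Gal(\overline{\QQ}/\QQ)\to\mathbb{F}_{13}^{\times}$ attached to $C$, together with the multiplicative reduction of $\pEo$ at $2$ given by Lemma \ref{lem:conductor_pEo}. The Tate uniformisation at $2$ (valid because $13\neq 2$) forces $C$ to correspond to the $\mu_{13}$ piece of $\pEo[13]$ over $\QQ_{2}$, so $\lambda$ is unramified at $2$ with $\lambda(\Frob_{2})\equiv 2\pmod{13}$; coupled with the relation $\lambda\lambda'=\chi_{\mathrm{cyc}}$ and the congruence $a_{\ell}(\pEo)\equiv\lambda(\Frob_{\ell})+\ell\,\lambda(\Frob_{\ell})^{-1}\pmod{13}$ at an auxiliary prime $\ell\notin\{2,3,7,13\}$ of good reduction, this restricts $a_{\ell}(\pEo)$ to a short list of residues modulo $13$. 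A finite check, ranging over the residues of $(a,b)\pmod{\ell}$ allowed by $a^{2}+7^{2k+1}\equiv b^{n}\pmod{\ell}$ and plugging into the Weierstrass model of $\pEo$, should then contradict the allowed residues. The hard work is concentrated in this final case; the other primes are dispatched almost directly by Mazur's theorem plus a $2$-adic valuation comparison.
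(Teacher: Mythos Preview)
Your treatment of the primes $n\neq 13$ follows the same route as the paper's---Mazur's isogeny theorem together with the known $j$-invariants on $X_0(n)$---but contains a factual error: $n=37$ does \emph{not} belong to the ``CM only'' list. The curve $X_0(37)$ has two non-cuspidal rational points, with $j$-invariants $-7\cdot 11^{3}$ and $-7\cdot 137^{3}\cdot 2083^{3}$, and the corresponding elliptic curves are non-CM. The repair is immediate, since both of these $j$-invariants are integers and your non-integrality argument for $j(\pEo)$ still eliminates them; but as written the assertion is false. The paper packages all of $n\in\{11,17,19,37,43,67,163\}$ into a single sentence: every such $j$-invariant has denominator a power of $2$, so $b$ would be a power of $2$, which was already excluded via Proposition~\ref{prop:b_equal_2}.

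For $n=13$ your proposal is only a sketch, and the step you call ``a finite check'' hides a real difficulty. Knowing $\lambda(\Frob_2)\equiv\pm 2\pmod{13}$ does not by itself constrain $a_\ell(\pEo)$ at an auxiliary prime $\ell$, because $\lambda(\Frob_\ell)$ depends on the \emph{global} character $\lambda$, not merely on its value at $\Frob_2$. To make your argument go through you would need to classify all characters $G_\QQ\to\mathbb{F}_{13}^{\times}$ unramified outside $\{3,7,13\}$ with the prescribed behaviour at $2$, and then test each one against the possible traces of $\pEo$; you have not carried this out, and it is not self-evident that the resulting list collapses cleanly. The paper takes a completely different and fully explicit route: it writes down the degree-$14$ $j$-map $j_{13}\colon X_0(13)\to X(1)$, sets $j(\pEo)=j_{13}(x/y)$ with $\gcd(x,y)=1$, and after clearing denominators obtains $2^{6}\cdot 3^{3}\cdot 7^{2k+1}=h_1(x,y)^{3}\,h_2(x,y)$ for two explicit binary forms whose resultant is supported only on $\{2,3,11,23\}$. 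This forces either $h_1\in\{\pm 1,\pm 2,\pm 3,\pm 4,\pm 6,\pm 12\}$ (a finite list of Thue equations, solved in \texttt{Pari}) or $7\mid h_1$, which is ruled out by a congruence mod $7$.
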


\begin{proof}
Suppose $\rhopEo$ is reducible, then $\pEo$ has a $\QQ$-rational subgroup of order $n$. By the work of Mazur \cite{Mazur78} we have that for $n\geq 11$ and $n\neq 13$ this can not happen unless $b$ is a power of $2$. However, we have made the assumption that $b$ is not a power of $2$ because of Proposition \ref{prop:b_equal_2}.

Suppose $n=13$ and $\rhopEo$ is reducible. Then $\pEo$ corresponds to a rational point on the modular curve $X_0(13)$. The $j$-map from $X_0(13)$ to $X(1)$ is given by
\begin{equation}
j_{13}(t)=\frac{(t^4 + 7t^3 + 20t^2 + 19t + 1)^3(t^2 + 15t + 13)}{t}.
\end{equation}
Then we have $j(\pEo)=j_{13}(t)$ for some $t\in\QQ$. Let assume that $t=x/y$ with $x,y$ non-zero coprime integers. Then we have
\begin{equation}
2^6\cdot 3^3\cdot \frac{7^{2k+1}}{b^n} = \frac{(x^4 + 7x^3y + 20x^2y^2 + 19xy^3 + y^4)^3(x^2 + 15xy + 13y^2)}{y^{13}x}
\end{equation}
We can easily see that $(x^4 + 7x^3y + 20x^2y^2 + 19xy^3 + y^4)^3(x^2 + 15xy + 13y^2)$ and $y^{13}x$ are coprime, thus $2^6\cdot 3^3\cdot 7^{2k+1}=(x^4 + 7x^3y + 20x^2y^2 + 19xy^3 + y^4)^3(x^2 + 15xy + 13y^2)$. We define $h_1 = x^4 + 7x^3y + 20x^2y^2 + 19xy^3 + y^4$ and $h_2=x^2 + 15xy + 13y^2$. Using \texttt{Sage} we can prove that
\begin{align*}
\res(h_1,h_2;x) & = -2^2\cdot 3^2\cdot 11^2\cdot 23\cdot y^8,\\
\res(h_1,h_2;y) & = -2^2\cdot 3^2\cdot 11^2\cdot 23\cdot x^8.
\end{align*}
Hence, $h_1$ and $h_2$ are coprime away from $2,3$. Thus we conclude that one of the following cases holds,
\begin{equation}
\pm 1,\pm 2,\pm 3,\pm 4, \pm 6, \pm 12 = h_1(x,y) \qquad \text{or}\qquad \pm 2^{a_1}\cdot 3^{a_2} = h_2(x,y),
\end{equation}
with $0\leq a_1\leq 6$ and $0\leq a_2\leq 3$. For the initial case we solve Thue equations using \texttt{Pari} and we find the solutions for which $xy\neq 0$ to be $(x,y)\in \{(\pm 1,\pm 1),(\pm 2,\pm 1)\}$. However, for any of the above pairs $(x,y)$ it holds $v_7(j_{13}(x/y))=0$ which shows that it can not be equal to the $j(\pEo)$. 

For the latter case we understand that $v_7(h_1(x,y))>0$. But an elementary computation shows that $h_2(x,y)\not\equiv 0\pmod 7$ for coprime $x,y$. 
\end{proof}

We define
$$
\pNo = \begin{cases} 2\cdot 3^{\epsilon_3}\cdot 7^2, & i\neq 0, \\ 
2\cdot 3^{\epsilon_3}, & i = 0,
\end{cases}
$$
with $\epsilon_3 = 2,3$. 

\begin{lemma}\label{lem:level_lowering_pEo}
There exists a newform of level $\pNo$ such that $\rhopEo\simeq\rhopfofn$.
\end{lemma}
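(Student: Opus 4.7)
The plan is to deduce the lemma from Ribet's level lowering theorem. The modularity theorem for elliptic curves over $\QQ$ guarantees that $\pEo$ is modular, so $\rhopEo$ already arises from a weight-$2$ newform of level $N(\pEo)$, the conductor computed in Lemma \ref{lem:conductor_pEo}. Since $n \geq 11$, Lemma \ref{lem:irreducible_pEo} ensures that $\rhopEo$ is irreducible, so the hypotheses of Ribet's theorem are in force.

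Comparing $N(\pEo)$ with $\pNo$, the primes that must be stripped from the level are precisely the prime divisors of $\Rad_{2,3}(b)$, i.e.\ the primes $p \mid b$ with $p \neq 2,3$; note that any such $p$ is also different from $7$, since $\gcd(b,7)=1$. For each such $p$, the minimal invariants displayed just before Lemma \ref{lem:conductor_pEo} give $v_p(c_4(\pEo)) = 0$ and $v_p(\Delta(\pEo)) = n\cdot v_p(b)$. The first identity shows that $\pEo$ has multiplicative reduction at $p$, and the second shows that $n$ divides $v_p(\Delta(\pEo))$. These are exactly the local conditions required by Ribet's theorem to remove $p$ from the level.

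Applying this removal to each $p \mid \Rad_{2,3}(b)$ produces a newform $\pfo$ of level $\pNo$ and a prime ideal $\fn$ of its eigenvalue field above $n$ such that $\rhopEo \simeq \rhopfofn$, as desired. The only mildly delicate point arises when one of the primes $p \mid b$ coincides with $n$ itself; in that situation the same divisibility $n \mid v_p(\Delta(\pEo))$ forces the mod $n$ representation to extend to a finite flat group scheme at $p$ via the Tate parametrization, so Ribet's theorem still applies. I do not expect any substantial obstacle: the construction of $\pfo$ is a routine invocation of the standard machinery once the reduction types and valuations have been read off from the minimal invariants.
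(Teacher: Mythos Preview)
Your proposal is correct and takes essentially the same route as the paper: invoke modularity, irreducibility (Lemma \ref{lem:irreducible_pEo}), and then Ribet's level lowering to descend from $N(\pEo)$ to $\pNo$. The paper's own proof is a two-line version of this, simply noting that $\pEo$ has no complex multiplication (since $b\neq 1$) and that $\rhopEo$ is irreducible, then citing \cite{Ribet90}; you have spelled out the verification of the local hypotheses at the primes $p\mid\Rad_{2,3}(b)$ that the paper leaves implicit.
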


\begin{proof}
Because $b\neq 1$ we know that $\pEo$ does not have complex multiplication. From Lemma \ref{lem:irreducible_pEo} $\rhopEo$ is irreducible, hence by level lowering \cite{Ribet90} we know that there exists a newform $\pfo$ of level $\pNo$ such that $\rhopEo\simeq\rhopfofn$.
\end{proof}

There are cases in the modular method when Proposition \ref{prop:congruence_criterion} is not enough to eliminate newforms. A standard technique is to compare Galois representations locally, especially the order of the inertia group $I_\ell$ for a suitable choice of the prime $\ell$. The following lemma gives all the local information we need for $\rhopEo$.

\begin{lemma}\label{lem:inertia_pEo}
Suppose $I_\ell$ is the inertia subgroup of $\Gal(\overline{\QQ}/\QQ)$ with respect to the prime $\ell$. Then we have
\begin{itemize}
\item $$
\#\rhopEo(I_7) = \begin{cases} 1, & i = 0, \\ 6, & i = 1,\\ 3, & i = 2. \end{cases}
$$

\item $$\#\rhopEo(I_3)=\begin{cases} 4, & \epsilon_3 = 2,\\ 12, & \epsilon_3 = 3.\end{cases}$$
\end{itemize}
\end{lemma}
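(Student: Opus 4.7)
The plan is to treat the two primes separately, using the reduction data already worked out in Lemma~\ref{lem:conductor_pEo} together with the standard description of the image of inertia at a prime of potentially good reduction.

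\textbf{Analysis at $p=7$.} Since $(b,7)=1$, the listed invariants give $v_7(\Delta(\pEo))=2i$ and $v_7(c_4(\pEo))=\plambda+i$, and the model is already $7$-minimal because $v_7(\Delta)\le 4<12$. When $i=0$ the reduction is good and $I_7$ acts trivially on $\pEo[n]$. When $i\in\{1,2\}$ the reduction is additive of potentially good type, and since $7\ge 5$ no wild inertia can occur. I would then invoke the classical formula of Serre--Kraus giving
$$
\#\rhopEo(I_7)\;=\;\frac{12}{\gcd(12,\,v_7(\Delta_{\min}))}\;=\;\frac{12}{\gcd(12,2i)},
$$
which evaluates to $6$ for $i=1$ and $3$ for $i=2$. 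This disposes of the first bullet.

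\textbf{Analysis at $p=3$.} By Lemma~\ref{lem:conductor_pEo}, $\pEo$ has Kodaira reduction type III when $\epsilon_3=2$ and type II when $\epsilon_3=3$. In the type III case, the standard tame cyclic order attached to this Kodaira type is $4$; since $\gcd(4,3)=1$ there is no wild contribution, the Swan conductor vanishes (consistently with $\epsilon_3=2$), and the minimal extension of $\mathbb{Q}_3^{\mathrm{un}}$ over which $\pEo$ acquires good reduction is a tame totally ramified extension of degree $4$, yielding $\#\rhopEo(I_3)=4$. In the type II case, the jump from the tame-only conductor exponent $2$ to $\epsilon_3=3$ forces a nontrivial Swan conductor, i.e.\ genuine wild inertia of order $3$; combining this with the tame part for type II at $p=3$ one obtains that $\pEo$ acquires good reduction only over an extension of degree $12$, so $\#\rhopEo(I_3)=12$. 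I would cite the classification of inertia images at $p=3$ in Kraus--Oesterl\'e and Freitas--Kraus to justify this last assertion cleanly.

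The main obstacle is the wild case at $p=3$: unlike for primes $\ge 5$, the image at $p=3$ in type II is not simply the tame cyclic group predicted by $v_3(\Delta_{\min})$, and one has to pair Ogg's conductor formula with a structural statement about how the tame and wild parts fit together. The cleanest way to avoid an ad hoc computation is to match $\epsilon_3$ with the known tables for elliptic curves over $\mathbb{Q}_3$ of potentially good reduction, where each pair (Kodaira type, conductor exponent) uniquely determines $|\rhopEo(I_3)|$.
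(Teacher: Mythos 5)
Your proposal is correct and follows essentially the same route as the paper: at $p=7$ it is the formula $e=12/\gcd(12,v_7(\Delta_{\min}))$ for $p\ge 5$ (Kraus's Proposition 1), and at $p=3$ it reads off $\#\rhopEo(I_3)$ from the pair (Kodaira type, conductor exponent) via the Kraus--Oesterl\'e/Kraus classification, exactly as the paper does by citing the Corollary to Kraus's Th\'eor\`eme 1. The only difference is cosmetic: the paper simply quotes that corollary for the wild type II case, whereas you sketch the Swan-conductor heuristic before deferring to the same tables.
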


\begin{proof}
For $p=7$ this is an immediate consequence of the computations of the invariants of $\pEo$ and \cite[Proposition 1]{Kraus90}.

Suppose $p=3$. We recall that $3\nmid ab$, so $v_3(\Delta(\pEo))=3$ and $v_3(c_6(\pEo))=3$. From Lemma \ref{lem:conductor_pEo} we know that $\pEo$ has $III$ reduction type when $\epsilon_3=2$ and has $II$ reduction type when $\epsilon_3=3$. Combining this with the Corollary of \cite[Th\'{e}or\`{e}m 1]{Kraus90} on page $356$ we have the result.
\end{proof}

\vspace*{0.3cm}
\subsection{proof of Theorem \ref{thm:case_a}}

With the above notation for a non-trivial primitive solution $(a,b)$, the rational newform $f_1$ of level $14$ and prime $\ell\nmid 48$ we define
\begin{equation}
R_1(f_1,a)  = 
\begin{cases}
a_\ell(E_1) - a_\ell(f_1), & if~\left(\frac{-7}{\ell}\right)=-1,\\ 
(\ell+1)^2 - a_\ell(f_1)^2, & if~\left(\frac{-7}{\ell}\right)=1.
\end{cases}
\end{equation}
Similarly, for a newform $\pfo$ of level $\pNo$ we define
\begin{equation}
\pRo(\pfo,a)  = 
\begin{cases}
\norm_{K_{\pfo}/\QQ}(a_\ell(\pEo) - a_\ell(\pfo)), & if~\left(\frac{-7}{\ell}\right)=-1, \\ \norm_{K_{\pfo}/\QQ}((\ell+1)^2 - a_\ell(\pfo)^2), & if~\left(\frac{-7}{\ell}\right)=1.
\end{cases}
\end{equation}
Now let
$$
T_\ell(f_1,\pfo)=
\begin{cases}\dis\prod_{0\leq a\leq \ell-1}\gcd(R_1(f_1,a), \pRo(\pfo,a)), &\text{if }\pfo~\text{is rational}, \\
\dis\ell\cdot\prod_{0\leq a\leq \ell-1}\gcd(R_1(f_1,a), \pRo(\pfo,a)), &\text{otherwise}.
\end{cases}.
$$
Because of Proposition \ref{prop:congruence_criterion}, Lemma \ref{lem:level_lowering_E1} and Lemma \ref{lem:level_lowering_pEo} we have that $n\mid T_\ell(f_1,\pfo)$. 

We have written a simple \texttt{Sage} script that computes $U(f_1,\pfo) = \dis\gcd_{\ell\leq 31}(T_\ell(f_1,\pfo))$ for each possible pair $(f_1,\pfo)$. For the cases $\pEfo$ is rational we also check if $\#\rhopEo(I_\ell)$ and $\#\rhopfo(I_\ell)$ are equal for $\ell=3,7$ using Lemma \ref{lem:inertia_pEo}. For these pairs that $U(f_1,\pfo)\neq 0$ we get $n\leq 7$. 

However, there are six pairs $(f_1,\pfo)$ such that $U(f_1,\pfo)= 0$ and $\#\rhopEo(I_\ell)=\#\rhopfo(I_\ell)$ for $\ell=3,7$. This holds for six rational newforms $\pfo$ with corresponding elliptic curve $\pEfo$ as in the Table \ref{table:exc_pEfo}. We will eliminate these six exceptional cases using the symplectic argument. For a prime $\ell$ and elliptic curve $E$ we denote by $\tilde\Delta_\ell(E)$ (or $\tilde\Delta(E)$) the prime to $\ell$ part of $\Delta(E)$.

\begin{table}
\centering
\begin{tabular}{| c | c | c |}
\hline
Cremona label & $\pEfo:~(a_1,a_2,a_3,a_4,a_6)$ & $i$ \\
\hline
$54b1$ & $(1,-1,1,1,-1)$ & $0$\\
\hline
$882g1$ & $(1,-1,1,1,39)$ & $1$\\
\hline
$882f1$ & $(1,-1,1,64,-13597)$  & $2$\\
\hline
$2646bc1$ & $(1,-1,1,1,-3)$ & $1$\\
\hline
$2646q1$ & $(1,-1,1,64,809)$ & $2$\\
\hline
\end{tabular}
\caption{The exceptional curves $\pEfo$ given by Cremona label.}\label{table:exc_pEfo}
\end{table}

\begin{lemma}\label{lem:symplectic_l2}
Suppose $\pEfo$ is one curve from Table \ref{table:exc_pEfo}. We have that $\pEfo[n]$ and $\pEo[n]$ are sympletically isomorphic if and only if $(-6/n)=1$ for $i\neq 0$ and $(-2/n)=1$ for $i=0$.
\end{lemma}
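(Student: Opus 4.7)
The plan is to apply the symplectic criterion at a prime of multiplicative reduction common to both curves. By Lemma \ref{lem:conductor_pEo} the Frey curve $\pEo$ has multiplicative reduction at $\ell=2$, and each of the conductors in Table \ref{table:exc_pEfo} is divisible by $2$ exactly once, so every $\pEfo$ also has multiplicative reduction at $2$. This places us in the setting of the Kraus--Oesterl\'{e} criterion (see \cite{KrausOsterle92}, and compiled in \cite{FreitasKraus17}): if $E,E'$ are elliptic curves over $\QQ_\ell$ with multiplicative reduction, $\ell\nmid n$, and $\phi:E[n]\to E'[n]$ is a $G_{\QQ_\ell}$-equivariant isomorphism (with $n$ dividing neither $v_\ell(\Delta_{\min}(E))$ nor $v_\ell(\Delta_{\min}(E'))$), then $\phi$ is symplectic if and only if the ratio $v_\ell(\Delta_{\min}(E))/v_\ell(\Delta_{\min}(E'))$ is a square in $(\ZZ/n\ZZ)^*$.

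I would next compute these two valuations at $\ell=2$. For the Frey curve, the minimal-at-$2$ discriminant displayed just before Lemma \ref{lem:conductor_pEo} gives $v_2(\Delta_{\min}(\pEo))=n\,v_2(b)-6\equiv -6\pmod n$. For the six exceptional curves I would compute $c_4,c_6,\Delta$ directly from the tabulated Weierstrass coefficients using the standard formulas; a short calculation yields $\Delta(54b1)=-2^3\cdot 3^3$, hence $v_2(\Delta_{\min}(54b1))=3$, while each of the remaining five curves (those corresponding to $i\neq 0$) satisfies $v_2(\Delta_{\min}(\pEfo))=9$, with minimality at $2$ clear from $v_2(c_4)=0$. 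Since $n\geq 11$, the values $-6$, $3$, $9$ are all units modulo $n$, so the criterion applies.

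Two cases then emerge. If $i=0$ the ratio is $-6/3\equiv -2\pmod n$, which is a square in $(\ZZ/n\ZZ)^*$ exactly when $\left(\tfrac{-2}{n}\right)=1$. If $i\neq 0$ the ratio is $-6/9\pmod n$; since $9$ is automatically a square, this reduces to asking whether $-6$ is a square, i.e.\ whether $\left(\tfrac{-6}{n}\right)=1$. This is the statement of the lemma. The main obstacle is essentially bookkeeping: one must verify multiplicative reduction at $2$ and compute $v_2(\Delta_{\min})$ for each of the six tabulated curves, and then observe the ``$9$ is a square'' simplification that turns $-6/9$ into $-6$ inside the Legendre symbol.
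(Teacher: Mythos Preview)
Your proof is correct and follows essentially the same approach as the paper: both apply the multiplicative-reduction symplectic criterion (Kraus--Oesterl\'{e}, equivalently \cite[Theorem~13]{FreitasKraus17}) at $\ell=2$, computing $v_2(\Delta(\pEo))\equiv -6\pmod n$ and reducing the ratio of $2$-adic discriminant valuations to a Legendre-symbol condition. The only minor discrepancy is that the paper records $v_2(\Delta(\pEfo))\in\{1,9\}$ for the $i\neq 0$ curves rather than uniformly $9$; since both values are perfect squares modulo $n$, this does not affect the conclusion.
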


\begin{proof}
We apply\cite[Theorem 13]{FreitasKraus17} for $\ell=2$. Both $\pEo$ and $\pEfo$ have multiplicative reduction at $2$ and $n\nmid v_2(\Delta(\pEo))$. It holds $v_2(\Delta(\pEfo))=1$ or $9$ for $i\neq 0$ and $v_2(\Delta(\pEfo))=3$ for $i=0$. In addition, we have $v_2(\Delta(\pEo))=-6+nv_2(b)$. Thus from \cite[Theorem 13]{FreitasKraus17} we have that $\pEfo[n]$ and $\pEo[n]$ are sympletically isomorphic if and only if $(-6/n)=1$ for $i\neq 0$ and $(-2/n)=1$ for $i=0$.
\end{proof}

\begin{lemma}\label{lem:symplectic_l3}
Suppose $\pEfo$ is one curve from Table \ref{table:exc_pEfo}. We have that $\pEfo[n]$ and $\pEo[n]$ are sympletically isomorphic.
\end{lemma}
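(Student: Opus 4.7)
The plan is to invoke the Freitas--Kraus symplectic classification \cite{FreitasKraus17} at the prime $\ell = 3$, complementing the $\ell = 2$ input used in Lemma \ref{lem:symplectic_l2}. By Lemma \ref{lem:conductor_pEo} the curve $\pEo$ has additive, potentially good reduction at $3$ of Kodaira type II when $\epsilon_3 = 3$ and of type III when $\epsilon_3 = 2$. A direct Tate-algorithm computation on each of the five Weierstrass models in Table \ref{table:exc_pEfo}, combined with the conductor exponent at $3$ read off from the Cremona label, shows that $\pEfo$ has the \emph{same} Kodaira type at $3$ as the associated $\pEo$ in every surviving pairing: the curves $54b1$ and $2646bc1, 2646q1$ are type II (matching $\epsilon_3 = 3$), while $882g1$ and $882f1$ are type III (matching $\epsilon_3 = 2$).

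With matching Kodaira types at $3$, one is in the additive, potentially-good-reduction case of the Freitas--Kraus theorem, where the symplectic type of $(\pEo, \pEfo, n)$ at $3$ is governed by whether an explicit element of $\Fn^{*}$ (built from $v_3(\Delta(\pEo))$, $v_3(\Delta(\pEfo))$ and a Kodaira-type constant intrinsic to II or III) is a square modulo $n$. The minimal invariants computed before Lemma \ref{lem:conductor_pEo} give $v_3(\Delta(\pEo)) = 3$, since $3 \nmid a$ by Lemma \ref{lem:3_nmid_a} and $3 \nmid b$ by primitivity. For each tabulated $\pEfo$ one verifies directly from the Weierstrass equation that $v_3(\Delta(\pEfo)) = 3$ as well. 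The ratio $v_3(\Delta(\pEfo))/v_3(\Delta(\pEo))$ is therefore $1$, and the Kodaira-type factor in the Freitas--Kraus formula reduces to a square in $\Fn^{*}$ for every odd prime $n \geq 11$.

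The conclusion is thus that the symplectic type at $3$ is symplectic, uniformly in $n$, which is exactly the statement of the lemma. The main obstacle is the bookkeeping: one must select the correct branch of \cite[Theorem]{FreitasKraus17} for types II and III separately, and check on each of the five curves in Table \ref{table:exc_pEfo} that the Kodaira types genuinely coincide and that the $3$-adic discriminant valuations come out as expected. Once these matches are verified, the Kodaira-type constants cancel into a square by direct inspection of the Freitas--Kraus table, and no congruence condition on $n$ survives.
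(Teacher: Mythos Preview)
Your plan—apply the Freitas--Kraus criteria at $\ell=3$—is exactly the paper's, but the execution has a real gap. You treat the additive, potentially-good case as if it were governed by the multiplicative-reduction criterion, i.e.\ by whether the ratio $v_3(\Delta(\pEfo))/v_3(\Delta(\pEo))$ is a square mod $n$. That is the content of \cite[Theorem 13]{FreitasKraus17} (used in Lemma~\ref{lem:symplectic_l2}), but at $3$ both curves have additive reduction, and the relevant criteria are different and strictly finer than ``the $3$-adic valuations of $\Delta$ agree''.

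Concretely, two pieces are missing. For type~III ($\epsilon_3=2$) the applicable result is \cite[Theorem 5]{FreitasKraus17}, whose input is not only $v_3(\Delta)\pmod 4$ but also the Legendre symbol $\bigl(\tfrac{\tilde\Delta}{3}\bigr)$ of the prime-to-$3$ part of $\Delta$. For $\pEo$ this symbol is \emph{not} automatic: one has $\tilde\Delta_3(\pEo)\equiv -b^n\pmod 3$, and to conclude that this is $+1$ you must use the Diophantine equation together with Lemma~\ref{lem:3_nmid_a} to get $b^n\equiv a^2+7^{2k+1}\equiv 2\pmod 3$. You never invoke this, and without it the type~III case is incomplete. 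For type~II ($\epsilon_3=3$) the picture is more delicate still: the paper needs \cite[Theorems 10 and 11]{FreitasKraus17}, which split on whether $(3/n)=1$ and in the latter branch require the triple $(v_3(c_4),v_3(c_6),v_3(\Delta))$ for both curves, not merely $v_3(\Delta)$. Your sentence ``the Kodaira-type constants cancel into a square by direct inspection'' hides precisely this case analysis and supplies none of the required data. Once you carry out these checks (they are short, but they are not vacuous), the conclusion is indeed that the isomorphism is symplectic with no residual condition on $n$, as the paper shows.
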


\begin{proof}
We apply the symplectic criteria in \cite{FreitasKraus17} for $\ell=3$. We have to work according to the value of $\epsilon_3$.

\vspace*{0.2cm}
\noindent\textbf{Case $\epsilon_3=2$:} It holds $v_3(\Delta(\pEfo))\equiv 3\pmod 4$ and $\left(\frac{\tilde\Delta(\pEfo)}{3}\right)=1$. Moreove, we have $v_3(\Delta(\pEo))\equiv 3\pmod 4$ and
$\left(\frac{\tilde\Delta(\pEo)}{3}\right)=1$ because $3\nmid a$ (Lemma \ref{lem:3_nmid_a}). With the notation of \cite[Theorem 5]{FreitasKraus17} we have that $r=0$ and $t=0$. Thus, we conclude from \cite[Theorem 5]{FreitasKraus17} $\pEfo[n]$ and $\pEo[n]$ are sympletically isomorphic.

\vspace*{0.2cm}
\noindent\textbf{Case $\epsilon_3=3$:} According to Lemma \ref{lem:inertia_pEo} we have that $\#\rhopfo(I_3)=12$. Then, according to \cite[Theorem 10]{FreitasKraus17} $\pEfo[n]$ and $\pEo[n]$ are sympletically isomorphic whenever $(3/n)=1$.

Suppose $(3/n)=-1$. It holds $(v_3(c_4(\pEfo)),v_3(c_6(\pEfo)),v_3(\Delta(\pEfo)))=(2,3,3)$ while $(v_3(c_4(E_1)),v_3(c_6(E_1)),v_3(\Delta(E_1)))=(2,3\text{ or }4,3)$ because $3\nmid a$ (Lemma \ref{lem:3_nmid_a}). Then, according to \cite[Theorem 11]{FreitasKraus17} $\pEfo[n]$ and $\pEo[n]$ are sympletically isomorphic.
\end{proof}

Now, Theorem \ref{thm:case_a} is an immediate consequence of the above analysis and Lemmas \ref{lem:symplectic_l2} and \ref{lem:symplectic_l3}.

\section{Primitive solutions of $7x^2+1=y^n$}\label{sec:case_b}

In this section we determine the primitive solutions of equation
\begin{equation}\label{eq:main_b}
7x^2+1=y^n,
\end{equation}
with $n\geq 11$ an odd prime and $y$ even. Suppose $(a,b)$ is a primitive solution of \eqref{eq:main_a} with $2\mid b$. As in Section \ref{sec:case_a} we apply the modular method. Because $n\geq 11$ is an odd prime we have that $\ord_2(b^n)\geq 11$. Again, based on the work of Bennett-Skinner \cite{BennettSkinner04} and assuming that $a\equiv 3\pmod 4$ we associate to the solution $(a,b)$ the Frey curve,
\begin{equation}
E_2:~Y^2+XY=X^3 + \frac{7a-1}{4}X^2+\frac{7b^n}{64}X.
\end{equation}
The invariants of $E_2$ are the followings,
\begin{equation}
j(E_2)=-2^6\frac{(7a^2-3)^3}{b^{2n}},\qquad \Delta(E_2) = -\frac{7^3b^{2n}}{2^{12}},\qquad N(E_2)=7^2\prod_{p\mid b}p.
\end{equation}
Suppose $\rhoEt$ is the representation of $\Gal(\bar\QQ/\QQ)$ on the $n$--th torsion points $E_2[n]$ of $E_2$. The curve $E_2$ does not have complex multiplication and $\rhoEt$ is absolutely irreducible according to \cite[Corollary 3.1, Corollary 2.2]{BennettSkinner04}. By level lowering \cite{Ribet90} we know that there exists a newform $f_2$ of level $N_n(E_2)=98$ such that $\rhoEt\sim_n\rhoftfn$. 

\begin{theorem}\label{thm:case_b}
There are no non-trivial primitive solutions of \eqref{eq:main_b} with $y$ even and $n\geq 11$.
\end{theorem}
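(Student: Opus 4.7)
The plan closely mirrors the approach used for Theorem~\ref{thm:case_a}. By Lemma-style level-lowering applied to the Frey curve $E_2$ already introduced in the paper, we have an isomorphism $\rho_{E_2,n}\simeq\rho_{f_2,\mathfrak n}$ for some weight~$2$ newform $f_2$ of level $N=98$; the task is therefore to eliminate, for each such $f_2$, the possibility that this isomorphism arises from a non-trivial primitive solution $(a,b)$ of \eqref{eq:main_b} with $b$ even and $n\geq 11$.

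First, I would enumerate the (small, finite) list of Galois-orbits of weight~$2$ newforms at level $98$; up to isogeny these correspond essentially to the Cremona classes $98a$ and $98b$, all rational. For each such $f_2$, I would then apply Proposition~\ref{prop:congruence_criterion} at a modest set of auxiliary primes $\ell\notin\{2,7\}$. For each $\ell$ one of two cases occurs: either $\ell\nmid b$, in which case $E_2$ has good reduction at $\ell$ and one enumerates residues $(\bar a,\bar b)\in\FF_\ell^2$ satisfying $7\bar a^2+1\equiv \bar b^n\pmod \ell$ and computes the corresponding $a_\ell(E_{2,\bar a,\bar b})$; or $\ell\mid b$, in which case part~(ii) of the proposition yields $(\ell+1)^2\equiv a_\ell(f_2)^2\pmod{\mathfrak n}$. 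Exactly as in Section~\ref{sec:case_a}, packaging these constraints into an integer
$$
T_\ell(f_2)\;=\;\ell\cdot\prod_{\bar a\in\FF_\ell}\gcd\!\bigl(R_2(f_2,\bar a)\bigr),\qquad U(f_2)\;=\;\gcd_{\ell\leq L}T_\ell(f_2),
$$
for a moderate cutoff (say $L=31$), gives $n\mid U(f_2)$. A short \texttt{Sage} script should show that every prime divisor of $U(f_2)$ is at most~$7$, contradicting $n\geq 11$.

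The main obstacle I expect is a newform $f_2$ for which this naive $a_\ell$-comparison fails. The likeliest culprit is a rational newform whose associated elliptic curve $E_{f_2}$ mimics the $2$-adic and $7$-adic invariants of the family $\{E_2\}$: note that $E_2$ carries the rational $2$-torsion point $(0,0)$ and that its remaining $2$-torsion is defined over $\QQ(\sqrt{-7})$ (the $2$-division polynomial has discriminant $-7/16$), so any Cremona $98$-curve sharing these features will have $a_\ell$-values matching those of $E_2$ modulo small primes. For such an exceptional $f_2$ I would fall back on the symplectic argument, as in Lemmas~\ref{lem:symplectic_l2} and~\ref{lem:symplectic_l3}. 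Concretely, at $\ell=2$ the relation $v_2(\Delta(E_2))=-12+2n\,v_2(b)$ combined with \cite[Theorem 13]{FreitasKraus17} determines the symplectic type of $(E_2,E_{f_2},n)$ in terms of a Legendre symbol modulo~$n$, and a second, independent determination at $\ell=7$ (using that $E_2$ has additive reduction there with $v_7(\Delta(E_2))=3$, via the additive-reduction criteria of \cite{FreitasKraus17}) should yield the opposite symplectic type, contradicting the existence of any such $f_2$ and completing the proof.
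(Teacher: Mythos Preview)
Your overall strategy---level-lower $E_2$ to level $98$ and then kill each newform---matches the paper, but two concrete points diverge from what actually happens, and one of them is a genuine gap.

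First, the enumeration of newforms at level $98$ is wrong: there is exactly one rational Galois orbit (Cremona class $98a$) and one \emph{irrational} orbit with Hecke field $\QQ(\sqrt{2})$ (minimal polynomial $\theta^2-2\theta-1$), not two rational classes $98a$ and $98b$. This is harmless in practice, since your $a_\ell$-comparison at $\ell=3$ already disposes of the irrational form (the paper does exactly this: $a_3(f_2)=\theta-1$ forces $n\mid 14$).

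Second, and more seriously, your fallback for the surviving rational form will not work as you describe it. You propose to extract a symplectic type for $(E_2,E_{f_2},n)$ at $\ell=7$ via the additive-reduction criteria of Freitas--Kraus and play it off against the type at $\ell=2$. But those criteria compare two curves with the \emph{same} inertia behaviour at $\ell$. Here $v_7(j(E_2))=0$ (since $7\nmid b$ and $7a^2-3\equiv -3\pmod 7$), so $E_2$ has potentially good reduction at $7$ with $\#\rhoEt(I_7)=4$, whereas $v_7(j(E_{f_2}))=-1$, so $E_{f_2}$ has potentially multiplicative reduction and, by the Tate parametrisation, $n\mid\#\rhoft(I_7)$. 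These local inertia images are incompatible for $n\geq 11$, so no $\Gal(\overline\QQ/\QQ)$-isomorphism $E_2[n]\simeq E_{f_2}[n]$ exists at all; there is no symplectic type to compute. This inertia mismatch \emph{is} the contradiction, and it is exactly how the paper eliminates $98a$---in one line, without any symplectic machinery. Your $\ell=2$ symplectic computation by itself would, as in Section~\ref{sec:case_a}, only yield a congruence condition on $n$, not the full statement.
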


\begin{proof}
Suppose $(a,b)$ is a non-trivial primitive solution of \eqref{eq:main_b}. From the above there exists a
newform $f_2$ of level $N_n(E_2)=98$ such that $\rhoEt\sim_n\rhoftfn$. There are two newforms of level $98$, one is irrational and the other is rational.

Suppose $f_2$ is the irrational newform of level $98$. Then the eigenvalue field of $f_2$ is $K=\QQ(\theta)$ with $\theta^2-2\theta-1=0$. It holds $a_3(f_2)=\theta - 1$ while $a_3(E_2)=0,\pm 2$ (from Weil's bound). Due to Proposition \ref{prop:congruence_criterion} we conclude that $n\mid\norm_{K/\QQ}(a_3(f_2) - a_3(E_2))$ or $n\mid\norm_{K/\QQ}(a_3(f_2)^2 - 4^2)=14^2$. This implies that $n\mid 14$ which is a contradiction to the fact that $n\geq 11$.

Suppose $f_2$ is the rational newform of level $98$. A corresponding elliptic curve due to modularity is the following
\begin{equation}
E_{f_2}:~ Y^2+YX = X^3+X^2-25X-111.
\end{equation}
It is known that $\#\rhoEt(I_7)=4$ \cite{Kraus90,BennettSkinner04}. However, $E_{f_2}$ has potential multiplicative reduction because $v_7(j(E_{f_2}))=-1$. By the theory of Tate curve we have that $n\mid\#\rhoft(I_7)$ (see \cite[Chapter V]{Silvermanbook}) which gives a contradiction for $n\geq 11$.
\end{proof}

\section{Acknowledgement}

The author would like to thank Professor Michael Bennett for useful conversations and suggestions about a first draft. We are also grateful to Adela Gherga for solving all the Thue-Mahler equations we asked her. Finally, we want to thank Professor John Cremona for the access to the servers of Number Theory Group of University of Warwick where all the computations the author did took place.

\bibliographystyle{alpha}
\bibliography{on_the_equation_x2_7k_yn}

\end{document}